\newtheorem{theorem}{Theorem}
\newtheorem{definition}{Definition}
\newtheorem{corollary}{Corollary}
\newtheorem{remark}{Remark}
\title{Semisimple subalgebras in simple Lie algebras and a computational approach to the compact Clifford-Klein forms problem}
\author[1]{Maciej Boche\'nski \thanks{mabo@matman.uwm.edu.pl}}
\author[1]{Piotr Jastrz\c ebski \thanks{piojas@matman.uwm.edu.pl}}
\author[1]{Anna Szczepkowska \thanks{anna.szczepkowska@matman.uwm.edu.pl}}
\author[1]{Aleksy Tralle \thanks{tralle@matman.uwm.edu.pl}}
\author[1]{Artur Woike \thanks{awoike@matman.uwm.edu.pl}}
\affil[1]{Faculty of Mathematics and Computer Science, University of Warmia and Mazury in Olsztyn, 
 Poland }
\begin{document}
\maketitle{}
\abstract{In this  paper we develop  algorithms of finding homogeneous spaces of semisimple non-compact Lie groups which do not admit compact Clifford-Klein forms. We propose a computer program which checks if the given homogeneous space has a non-vanishing cohomological obstruction (found by Tholozan) to compact Clifford-Klein forms. By a numerical experiment we show that there is a large class of homogeneous spaces satisfying Tholozan's condition.}
\vskip6pt
\noindent {\it Keywords:} semisimple subalgebra, Clifford-Klein form.
\vskip6pt
\noindent {\it AMS Subject Classification:} 17B20, 22F30,22E40, 65-05,65F

\section{Introduction}\label{sec:intro}
Assume that we are given a non-compact homogeneous space $G/H$ of a reductive real Lie group $G$ and a closed subgroup $H\subset G$. Suppose that there exists a discrete subgroup $\Gamma\subset G$ which acts properly,  co-compactly and freely on $G/H$ by left translations. The quotient $\Gamma\backslash G/H$ is called a compact Clifford-Klein form. The problem of determining which reductive homogeneous spaces admit such forms goes back to Calabi and Markus \cite{CM62}  and was formulated as a research program by T. Kobayashi \cite{Kob89}. There are many results aimed at fulfilling it \cite{BEN}, \cite{BL},  \cite{BT}, \cite{BT1}, \cite{LMZ}, \cite{Ka}, \cite{Kob92}, \cite{Kob93},\cite{Kob96},\cite{Kob98}, \cite{M}, \cite{MA},\cite{OH}, \cite{Th},\cite{OW}, but in general the problem is far from being solved (see, for example  the surveys \cite{C} and \cite{KY}). Moreover, the known families of homogeneous spaces with compact Clifford-Klein forms are of a special nature \cite{C}. 
In \cite{M} and \cite{Th} Morita and Tholozan found some  topological  obstructions to the existence of compact Clifford-Klein forms. The examples obtained with the use of these works are significant, because they make advances toward {\it Kobayashi's space form conjecture}. This hypothesis states that the homogeneous space $\mathbb{H}^{p,q}=SO_0(p,q+1)/SO_0(p,q)$ has a compact Clifford-Klein form if and only if one of the following holds: $p$ is even and $q=1$, $p$ is divisible by $4$ and $q=3$, $p=8$ and $q=7$ . Using the found obstruction, Tholozan was able to prove that for  the following pairs $(G,H)$, the homogeneous spaces $G/H$ do not have compact Clifford-Klein forms (\cite{Th}, Theorem 5):
\begin{itemize}
\item $G=SO_0(p,q+r),H=SO_0(p,q);p,q>0, r>0,$ $p$ odd.
\item $G=SL(n,\mathbb{R}), H=SL(m,\mathbb{R}); 1<m<n,$ $m$ even,
\item $G=SL(p+q,\mathbb{C}), H=SU(p,q);p,q>0$,
\item $G=Sp(2(p+q),\mathbb{C}), H=Sp(p,q)$,
\item $G=SO(2n,\mathbb{C}), H=SO^*(2n)$,
\item $G=SL(p+q,\mathbb{R}),H=SO_0(p,q); p,q>1$,
\item $G=SU^*(p+q),H=Sp(p,q); p,q>1.$
\end{itemize}
On the other hand, the lack of compact Clifford-Klein forms for these homogeneous spaces was established by a different (but may be equivalent) line of reasoning by Morita \cite{M}. Also, Theorem 5 in \cite{Th} extends some previous results obtained by different methods, for example by Kobayashi and Ono \cite{KO}, Labourie, Mozes and Zimmer \cite{LMZ} and others.  
Therefore, in this work, we ask a question: {\it what are the possible families of homogeneous spaces which do not have compact Clifford-Klein forms due to Tholozan's  cohomological obstruction?} 
   The aim of the present article is to show by {\it numerical experiments} that the method of \cite{Th} can be applied to a surprisingly large variety of homogeneous spaces (Theorem \ref{thm:main}).  We  construct algorithms which detect homogeneous spaces with no compact Clifford-Klein forms because they satisfy the property found by Tholozan. We get families of homogeneous spaces $G/H$ by applying our algorithm to classes of pairs $(\mathfrak{g},\mathfrak{h})$, where $\mathfrak{h}$ are semisimple subalgebras of a simple linear Lie algebra $\mathfrak{g}$. We use the algorithms of generating such pairs developed by de Graaf \cite{G},\cite{G1} and Dietrich, Faccin and de Graaf \cite{DFG}. In order to present our results we begin by describing obstructions found in \cite{M} and \cite{Th}.
 Let $K_H\subset H$ be a maximal compact subgroup of $H$ and $K\supset K_H$ be a maximal compact subgroup of $G$ containing $K_H$. Also denote by $G_{U}$ and $H_{U}$ the compact duals of $G$ and $H$, respectively (see Section \ref{sec:prelim}  for a description of duality). 

\begin{theorem} [Morita, Theorem 1.3 in \cite{M}]
If the homomorphism
$$\pi^{\ast} : H^{\ast}(G_{U}/H_{U};\mathbb{R}) \rightarrow H^{\ast}(G_{U}/K_{H};\mathbb{R})$$
induced by the projection $\pi : G_{U}/K_{H} \rightarrow G_{U}/H_{U}$ is not injective, then G/H does not admit a compact Clifford-Klein forms.
\end{theorem}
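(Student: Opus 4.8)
The plan is to prove the contrapositive: assuming a compact Clifford-Klein form $X=\Gamma\backslash G/H$ exists, I will show that $\pi^{\ast}$ must be injective. First I would record the homotopy-theoretic reductions. Since $G/K$ is contractible (Cartan decomposition), the bundle $H/K_H\to G/K_H\to G/H$ has contractible fibre, so the projection $G/K_H\to G/H$ is a homotopy equivalence; the same projection descends to a homotopy equivalence $\Gamma\backslash G/K_H\to X$, which I use to identify $H^{\ast}(X;\mathbb{R})\cong H^{\ast}(\Gamma\backslash G/K_H;\mathbb{R})$. Replacing $\Gamma$ by a finite-index subgroup if necessary, I may assume that $X$, $G_U/H_U$ and $G_U/K_H$ are orientable; this preserves the Clifford-Klein form property and all dimension counts, and in particular $\dim G/H=\dim G_U/H_U=:n$.

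Next I would build the characteristic homomorphism. Because $G_U$ is compact, averaging identifies $H^{\ast}(G_U/H_U;\mathbb{R})$ with the cohomology of the complex of $G_U$-invariant forms, i.e.\ with the relative Lie algebra cohomology $H^{\ast}(\mathfrak{g}_U,\mathfrak{h}_U)$. Using the duality of invariant forms (both $\Lambda^{\ast}(\mathfrak{g}/\mathfrak{h})^{\ast}$ and $\Lambda^{\ast}(\mathfrak{g}_U/\mathfrak{h}_U)^{\ast}$ complexify to $\Lambda^{\ast}(\mathfrak{g}_{\mathbb{C}}/\mathfrak{h}_{\mathbb{C}})^{\ast}$ with matching Chevalley-Eilenberg differentials), this ring is isomorphic to the $G$-invariant de Rham cohomology of $G/H$. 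A $G$-invariant form on $G/H$ is in particular $\Gamma$-invariant, hence descends to $X$, yielding a graded ring homomorphism $\eta_H\colon H^{\ast}(G_U/H_U;\mathbb{R})\to H^{\ast}(X;\mathbb{R})$. The analogous construction for $K_H$ gives $\eta_{K_H}\colon H^{\ast}(G_U/K_H;\mathbb{R})\to H^{\ast}(\Gamma\backslash G/K_H;\mathbb{R})\cong H^{\ast}(X;\mathbb{R})$, and since pulling back invariant forms along $G/K_H\to G/H$ is compatible with the descent to the quotients, the triangle commutes: $\eta_H=\eta_{K_H}\circ\pi^{\ast}$.

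The heart of the argument is that $\eta_H$ is injective. For the top degree, the reductive pair carries a $G$-invariant volume form, which represents a generator of $H^n(G_U/H_U;\mathbb{R})\cong\mathbb{R}$ and descends to the volume form of the closed manifold $X$; since $\int_X\eta_H(\mathrm{vol})=\mathrm{vol}(X)>0$, the map $\eta_H^n$ is nonzero. I then invoke the elementary fact that a degree-preserving ring homomorphism $f\colon A\to B$ between Poincar\'e duality algebras of the same formal dimension $n$ that is nonzero in degree $n$ is injective: if $0\neq a\in A^i$, then by nondegeneracy of the cup product there is $b\in A^{n-i}$ with $ab\neq 0$ in $A^n$, whence $f(a)f(b)=f(ab)\neq 0$ and so $f(a)\neq 0$. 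Both $H^{\ast}(G_U/H_U)$ and $H^{\ast}(X)$ are such algebras (closed oriented manifolds of dimension $n$), so $\eta_H$ is injective.

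Finally, the factorization $\eta_H=\eta_{K_H}\circ\pi^{\ast}$ shows that if $\pi^{\ast}(\xi)=0$ then $\eta_H(\xi)=0$, forcing $\xi=0$; thus $\pi^{\ast}$ is injective whenever a compact Clifford-Klein form exists, which is the desired contrapositive. I expect the main obstacle to be the second step rather than the formal conclusion: rigorously identifying $H^{\ast}(G_U/H_U;\mathbb{R})$ with the $G$-invariant de Rham cohomology of $G/H$ as graded rings (the duality of invariant forms, which genuinely uses the reductive structure and complexification) and pinning down the top-degree nonvanishing, including the orientability and unimodularity reductions that guarantee an invariant volume form descending to $X$. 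The Poincar\'e-duality lemma and the diagram chase are then routine.
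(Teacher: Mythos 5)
The paper does not actually prove this statement: it is imported verbatim as Morita's Theorem 1.3 from \cite{M}, and the paper's own non-existence machinery runs instead through Tholozan's Theorems \ref{thm:omega} and \ref{thm:vanishing-omega} (via Theorem \ref{thm:d} and Corollary \ref{cor:algor-base}). So your proposal can only be measured against Morita's original argument, and in substance it reconstructs exactly that argument (which is the Kobayashi--Ono proportionality-type argument of \cite{KO}): the homotopy equivalences $G/K_H\to G/H$ and $\Gamma\backslash G/K_H\to\Gamma\backslash G/H$ coming from the contractible fibre $H/K_H$, the characteristic map $\eta_H$ obtained by identifying $H^{\ast}(G_U/H_U;\mathbb{R})$ with relative Lie algebra cohomology and descending $G$-invariant forms to the compact quotient, the factorization $\eta_H=\eta_{K_H}\circ\pi^{\ast}$, and injectivity of $\eta_H$ from the invariant volume form together with the Poincar\'e-duality-algebra lemma. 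The same Poincar\'e duality mechanism is what the paper itself exploits in its proof of Theorem \ref{thm:sl2}, so your route is consistent with the paper's toolkit; it is the correct proof strategy, not merely a plausible one.

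Two points need tightening, neither fatal. First, the duality of invariant forms is naturally defined over $\mathbb{C}$, not $\mathbb{R}$: the identification of $\Lambda^k(\mathfrak{g}/\mathfrak{h})^{\ast}$ with $\Lambda^k(\mathfrak{g}_U/\mathfrak{h}_U)^{\ast}$ inside $\Lambda^k(\mathfrak{g}_{\mathbb{C}}/\mathfrak{h}_{\mathbb{C}})^{\ast}$ involves powers of $i$ and does not carry real classes to real classes, so the sentence ``this ring is isomorphic to the $G$-invariant de Rham cohomology of $G/H$'' is not available over $\mathbb{R}$ as a natural map. The standard repair is to run your entire diagram with $\mathbb{C}$ coefficients (complexified cohomology of closed oriented manifolds is still a Poincar\'e duality algebra, and the volume class maps to a nonzero multiple of the volume class of $X$), conclude that the complexified $\pi^{\ast}$ is injective, and then deduce injectivity of the real $\pi^{\ast}$ by linear algebra; you flagged this as the delicate step, and it is. Second, orientability of $G_U/H_U$ and $G_U/K_H$ cannot be arranged by passing to a finite-index subgroup of $\Gamma$; it comes from the standing connectedness hypotheses on $H$ (hence $H_U$, $K_H$ connected), while orientability of $X$ itself is automatic because the connected group $G$ acts on $G/H$ by orientation-preserving diffeomorphisms. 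With those adjustments your proof is complete and agrees with the argument in \cite{M}.
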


 Let $i$ denote the canonical map $i: H_U/K_H\rightarrow G_U/K$.
\begin{theorem}[(Tholozan,\cite{Th}]\label{thm:omega} Let $G/H$ be a reductive homogeneous space, with $G$ and $H$ connected and with finite center. Set $p=\dim G/H-\dim K/K_H$. Then there exists a $G$-invariant $p$-form $\omega_{G,H}$ on $G/K$ such that, for any torsion-free discrete subgroup $\Gamma\subset G$ acting properly discontinuosly on $G/H$ we have
$$vol(\Gamma\setminus G/H)=|\int_{[\Gamma]}\omega_{G,H}|.$$
\end{theorem}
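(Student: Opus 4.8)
The plan is to realize $\omega_{G,H}$ by fiber integration over the compact fiber of the double fibration attached to the inclusions $K_H\subset K$ and $K_H\subset H$, and then to identify $\mathrm{vol}(\Gamma\backslash G/H)$ with a cohomological pairing on the classifying space $B\Gamma\simeq\Gamma\backslash G/K$.

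First I would record the two fibrations. Since $K_H$ is a common compact subgroup, there are $G$-equivariant projections
\[
G/K \xleftarrow{\ \pi\ } G/K_H \xrightarrow{\ q\ } G/H,
\]
where $\pi$ has compact fiber $K/K_H$ of dimension $d:=\dim K/K_H$ and $q$ has fiber $H/K_H$, the Riemannian symmetric space of $H$, which is contractible. Because $G/H$ is reductive it is unimodular, so it carries a $G$-invariant volume density $\mu$ of top degree $n:=\dim G/H$; passing to the orientation double cover if necessary I may treat $\mu$ as an $n$-form. I then define $\omega_{G,H}$ on $G/K$ as the fiber integral $\pi_*(q^*\mu)$ of the closed $n$-form $q^*\mu$ over the compact fibers $K/K_H$, normalized by $\mathrm{vol}(K/K_H)$. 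This is a form of degree $n-d=p$. Invariance of $\mu$, $\pi$ and $q$ makes $\omega_{G,H}$ $G$-invariant, and since $G$-invariant forms on a symmetric space are parallel, $\omega_{G,H}$ is closed.

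Next I would build the homology class $[\Gamma]$. As $G/K$ is contractible and $\Gamma$ is torsion-free and acts freely and properly discontinuously, $\Gamma\backslash G/K$ is a $K(\Gamma,1)$, so $H^{\ast}(\Gamma\backslash G/K;\mathbb R)\cong H^{\ast}(\Gamma;\mathbb R)$ and the descended form $\bar\omega_{G,H}$ represents a class in $H^p(\Gamma;\mathbb R)$. The bundle $\Gamma\backslash G/K_H\to\Gamma\backslash G/H$ has contractible fiber $H/K_H$, hence is a homotopy equivalence, while $\Gamma\backslash G/K_H\to\Gamma\backslash G/K$ has compact fiber $K/K_H$; composing gives the classifying map $f:\Gamma\backslash G/H\to B\Gamma$. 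Taking the fundamental class $[\Gamma\backslash G/H]$ (in the compact Clifford-Klein case; in general one uses the locally finite fundamental class) and integrating it over the compact fiber $K/K_H$ produces a canonical class $[\Gamma]\in H_p(\Gamma;\mathbb R)$, against which $\omega_{G,H}$ is paired.

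Finally I would establish the volume identity by the projection formula for fiber integration. Naturality of fiber integration under the $\Gamma$-quotient reduces $\int_{[\Gamma]}\omega_{G,H}$ to the pairing of $q^*\mu$ with the fundamental class of $\Gamma\backslash G/H$; choosing a smooth section $\sigma$ of the contractible-fiber bundle $\Gamma\backslash G/K_H\to\Gamma\backslash G/H$ and using $q\circ\sigma=\mathrm{id}$ turns this into $\int_{\Gamma\backslash G/H}\mu$, with the normalization by $\mathrm{vol}(K/K_H)$ cancelling the fiber volume introduced by $\pi_*$. This yields $\mathrm{vol}(\Gamma\backslash G/H)=|\int_{[\Gamma]}\omega_{G,H}|$. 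I expect the main obstacle to be exactly this last matching: reconciling the cohomological fiber integration that defines $\omega_{G,H}$ with the homological fiber integration that defines $[\Gamma]$ over the non-compact spaces $\Gamma\backslash G/K$ and $\Gamma\backslash G/K_H$, together with the orientation and unimodularity bookkeeping needed to pass from the invariant density $\mu$ to an honest integral. The orientation ambiguities of $\Gamma\backslash G/H$, of $K/K_H$ and of the bundle are precisely what force the absolute value in the statement.
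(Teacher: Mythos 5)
First, a point about the comparison itself: this paper does \emph{not} prove Theorem \ref{thm:omega}. It is quoted from Tholozan \cite{Th} and used as a black box (the only statements proved in the paper are Theorem \ref{thm:d}, Corollary \ref{cor:algor-base} and Theorem \ref{thm:sl2}). So your attempt can only be judged on its own terms and against the argument in \cite{Th}; structurally, what you describe is indeed that argument: the double fibration $G/K \leftarrow G/K_H \rightarrow G/H$, the form $\omega_{G,H}$ obtained by integrating $q^*\mu$ over the compact fibres $K/K_H$, the identification $\Gamma\backslash G/K\simeq B\Gamma$, the class $[\Gamma]$ obtained from the fundamental class of $\Gamma\backslash G/H$ via the homotopy equivalence $\Gamma\backslash G/K_H\to\Gamma\backslash G/H$ followed by integration over the fibre, and the projection formula plus a section of the contractible-fibre bundle to collapse the pairing to $\int_{\Gamma\backslash G/H}\mu$.

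There is, however, one genuine error: the normalization of $\omega_{G,H}$ by $1/\mathrm{vol}(K/K_H)$, and the accompanying claim that it ``cancels the fiber volume introduced by $\pi_*$''. No such factor ever appears. Fibre integration of forms and the homological Gysin map are both normalized so that the projection formula holds with coefficient exactly one,
$$\langle \pi_*\alpha,\ \pi_{!}c\rangle=\langle \alpha,\ c\rangle,$$
that is, Fubini holds on the nose; the homological ``integration over the fibre'' is purely topological and carries no metric factor that could compensate a rescaling of the form. A decisive test case: let $H$ be compact and connected, so $K_H=H$ and $q=\mathrm{id}$, and let $\Gamma\subset G$ be cocompact and torsion-free. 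Then $[\Gamma]$ is the fundamental class of the closed manifold $\Gamma\backslash G/K$, and by Fubini
$$\int_{[\Gamma]}\pi_*(\mu)=\int_{\Gamma\backslash G/H}\mu=\mathrm{vol}(\Gamma\backslash G/H),$$
exactly as the theorem asserts --- whereas your normalized form yields $\mathrm{vol}(\Gamma\backslash G/H)/\mathrm{vol}(K/H)$, so the identity you prove is off by the constant $\mathrm{vol}(K/K_H)$. (What \emph{is} true, and is probably the source of the confusion, is the pointwise identity that $\pi_*(q^*\mu)$ equals $\mathrm{vol}(K/K_H)$ times the normalized average of the $\mathrm{Ad}(k)$-translates, $k\in K$, of the horizontal $p$-form; but there is no compensating factor on the homological side.) The correct definition is the unnormalized $\omega_{G,H}=\pi_*(q^*\mu)$; this is also consistent with Theorem \ref{thm:vanishing-omega} quoted above, whose normalizing constant $1/\mathrm{vol}(G_U/H_U)$ would be redundant if $\omega_{G,H}$ already carried your factor. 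A secondary, smaller gap: deferring the non-cocompact case to ``the locally finite fundamental class'' is not enough, since pairing a locally finite class against a (non-compactly supported) form is precisely where finiteness of the volume must enter; but this is minor compared with the normalization error, which as written makes the asserted equality false.
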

Thus, $\omega_{G,H}$ yields an obstruction in some cases: if it is zero, no $\Gamma$ can yield a compact Clifford-Klein form. In particular, Tholozan found some conditions on $G/H$ which ensure $\omega_{G,H}=0$. Also,  the dual homogeneous spaces $G/K$ and $G_U/K$ have the following property. By construction, the tangent spaces at the base point $x_0=K$ in $G/K$ and $G_U/K$ are isomorphic as representations of $K$. This induces an isomorphism between the exterior algebras of invariant forms on $G/K$ and $G_U/K$. If $\alpha$ is a $G$-invariant form on $G/K$, the image of $\alpha$ by this isomorphism will be denoted by $\alpha^U$. 
\begin{theorem}[Tholozan, \cite{Th}, Theorem 3.2]\label{thm:vanishing-omega} The cohomology class of the form
$${1\over vol(G_U/H_U)}\omega^{U}_{G,H}\in H^*(G_U/K,\mathbb{R})$$
is Poincar\'e dual to the homology class $i_*[H_U/K_H]$.
\end{theorem}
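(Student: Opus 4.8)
The plan is to verify the defining property of the Poincar\'e dual directly, by pairing the class with closed forms of complementary degree and exploiting both the explicit construction of $\omega_{G,H}$ and the compact symmetric space structure of $G_U/K$. First I would record the dimension count. Writing $\mathfrak{g}=\mathfrak{k}\oplus\mathfrak{p}$ and $\mathfrak{h}=\mathfrak{k}_H\oplus\mathfrak{p}_H$ for the Cartan decompositions, one has $p=\dim G/H-\dim K/K_H=\dim\mathfrak{p}-\dim\mathfrak{p}_H$. Hence $\deg\omega_{G,H}=p$ is complementary, inside $G_U/K$ (of dimension $\dim\mathfrak{p}$), to $d:=\dim\mathfrak{p}_H=\dim H_U/K_H$, the dimension of the cycle $i_*[H_U/K_H]$, so the statement is dimensionally consistent.

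The key input is the explicit shape of $\omega_{G,H}$ underlying Theorem \ref{thm:omega}: it is the fiber integration of the pulled-back volume form along the double fibration $G/H \xleftarrow{\pi_1} G/K_H \xrightarrow{\pi_2} G/K$, namely $\omega_{G,H}=(\pi_2)_{!}\,\pi_1^*\mathrm{vol}_{G/H}$, where $(\pi_2)_{!}$ is integration over the fiber $K/K_H$. Because this construction is purely algebraic at the level of the Cartan data $\mathfrak{p}_H\subset\mathfrak{p}$ and of the common compact group $K$, the duality isomorphism $\alpha\mapsto\alpha^U$ carries it to the analogous construction on the compact dual; thus $\omega^U_{G,H}=(\pi_2^U)_{!}\,(\pi_1^U)^*\mathrm{vol}_{G_U/H_U}$ for the dual double fibration $G_U/H_U \xleftarrow{\pi_1^U} G_U/K_H \xrightarrow{\pi_2^U} G_U/K$. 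Establishing this compatibility, with the correct reality and normalization (no spurious powers of $i$ or extra volume factors), is the first thing I would check carefully.

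Next I would reduce the Poincar\'e duality assertion to an integral identity. Since $G_U/K$ is a compact symmetric space, its real cohomology is represented by $G_U$-invariant (hence harmonic) forms, and the pairing $[\alpha]\otimes[\beta]\mapsto\int_{G_U/K}\alpha\wedge\beta$ is nondegenerate; so it suffices to prove, for every $G_U$-invariant closed $d$-form $\beta$ on $G_U/K$, that
$$\frac{1}{\mathrm{vol}(G_U/H_U)}\int_{G_U/K}\omega^U_{G,H}\wedge\beta=\int_{H_U/K_H}i^*\beta=\int_{i_*[H_U/K_H]}\beta .$$
To evaluate the left-hand side I would apply the projection formula for fiber integration, $\int_{G_U/K}(\pi_2^U)_!(\gamma)\wedge\beta=\int_{G_U/K_H}\gamma\wedge(\pi_2^U)^*\beta$, turning it into $\int_{G_U/K_H}(\pi_1^U)^*\mathrm{vol}_{G_U/H_U}\wedge(\pi_2^U)^*\beta$. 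Then I would integrate along the fibers of $\pi_1^U$, which are copies of $H_U/K_H$, via Fubini: the result is $\int_{G_U/H_U}\big[(\pi_1^U)_!(\pi_2^U)^*\beta\big]\,\mathrm{vol}_{G_U/H_U}$, where the bracketed term is a function on $G_U/H_U$. Because $\beta$ is $G_U$-invariant this function is constant, and its value is read off on the fiber over the base point, where $\pi_2^U$ restricts to exactly the canonical map $i$; hence the constant equals $\int_{H_U/K_H}i^*\beta$, and collecting terms gives the displayed identity.

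Finally I would confirm that this identity characterizes $\frac{1}{\mathrm{vol}(G_U/H_U)}[\omega^U_{G,H}]$ as $\mathrm{PD}(i_*[H_U/K_H])$: pairing the Poincar\'e dual of $i_*[H_U/K_H]$ with $[\beta]$ gives $\int_{i_*[H_U/K_H]}\beta$ by definition, which matches, and nondegeneracy of the pairing on $H^*(G_U/K;\mathbb{R})$ forces equality of the two classes. I expect the principal difficulties to be the two bookkeeping points flagged above, namely verifying that $\alpha\mapsto\alpha^U$ intertwines the fiber-integration construction with the correct normalization, and tracking the orientations and signs of the fibers $K/K_H$ and $H_U/K_H$ so that the identity holds exactly rather than merely up to sign, rather than the homotopy-theoretic skeleton, which is the standard projection-formula-plus-Fubini argument.
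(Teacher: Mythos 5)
This statement is not proved in the paper at all: it is imported verbatim from Tholozan \cite{Th} (Theorem 3.2 there) and used as a black box, so there is no in-paper argument to measure your proposal against; it can only be judged on its own merits and against Tholozan's original proof. On that basis your proposal is sound, and it is essentially a reconstruction of the argument in \cite{Th}. You correctly identify the one genuinely necessary input that the present paper's statement of Theorem \ref{thm:omega} does not supply: the volume property there does not determine $\omega_{G,H}$, so one must use its actual construction, namely fiber integration $\omega_{G,H}=(\pi_2)_{!}\,\pi_1^*\mathrm{vol}_{G/H}$ along the double fibration $G/H \leftarrow G/K_H \rightarrow G/K$ with compact fiber $K/K_H$; this is precisely how the form is defined in \cite{Th}. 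Granting that, your chain of reductions is the standard and correct one: the duality isomorphism $\alpha\mapsto\alpha^U$ intertwines the fiber-integration constructions on $G/K$ and $G_U/K$; invariant forms compute $H^*(G_U/K;\mathbb{R})$ because $G_U/K$ is a compact symmetric space, so it suffices to test against $G_U$-invariant closed $d$-forms $\beta$; the projection formula plus Fubini plus $G_U$-invariance evaluates
$$\int_{G_U/K}\omega^U_{G,H}\wedge\beta=\mathrm{vol}(G_U/H_U)\int_{H_U/K_H}i^*\beta,$$
since the fiber of $G_U/K_H\rightarrow G_U/H_U$ over the base point maps to $G_U/K$ by exactly the canonical map $i$; and nondegeneracy of the cup-product pairing on the closed oriented manifold $G_U/K$ then forces the class identity. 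The two bookkeeping points you flag (normalization compatibility of $\alpha\mapsto\alpha^U$ with the volume forms, and orientations/signs in the two fiber integrations) are indeed the only places requiring care, and they are resolvable by fixing consistent orientations induced from the Killing-form metrics. In short: a correct proposal, matching the source's approach, with the caveat that it necessarily relies on the definition of $\omega_{G,H}$ from \cite{Th} rather than anything available in the paper under review.
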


The main application of the algorithms developed in this article is formulated as follows.
\begin{theorem}\label{thm:main} Let $G/H$ be a homogeneous space of non-compact simple linear Lie group $G$ of exceptional type.  If the pair  $(\mathfrak{g},\mathfrak{h})$ is contained in one of the tables in Section \ref{sec:tables}, then $G/H$  does not have compact Clifford-Klein forms.
\end{theorem}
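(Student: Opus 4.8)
The plan is to reduce the statement $\omega_{G,H}=0$ to a purely cohomological computation on the compact dual $G_U/K$, and then automate that computation. By Theorem \ref{thm:omega}, the volume of any compact Clifford-Klein form equals $|\int_{[\Gamma]}\omega_{G,H}|$, so if $\omega_{G,H}=0$ then no torsion-free $\Gamma$ can produce a compact quotient; hence it suffices to prove $\omega_{G,H}=0$ for every pair $(\mathfrak{g},\mathfrak{h})$ in the tables. By Theorem \ref{thm:vanishing-omega}, the cohomology class of $\tfrac{1}{\mathrm{vol}(G_U/H_U)}\omega^U_{G,H}$ is Poincar\'e dual to $i_*[H_U/K_H]$ in $H^*(G_U/K,\mathbb{R})$. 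Since passing from $\omega_{G,H}$ to $\omega^U_{G,H}$ is an isomorphism of invariant forms that does not affect vanishing, the whole problem becomes: \emph{decide whether the image class $i_*[H_U/K_H]\in H_*(G_U/K,\mathbb{R})$ vanishes}, where $i\colon H_U/K_H\to G_U/K$ is the canonical map. This is the computational heart of Theorem \ref{thm:main}.

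First I would fix, for each pair $(\mathfrak{g},\mathfrak{h})$ in the tables, the associated compact groups $G_U\supset K\supset K_H$ and $H_U\supset K_H$ via the duality recalled in Section \ref{sec:prelim}, using the classification of semisimple subalgebras $\mathfrak{h}\subset\mathfrak{g}$ produced by the algorithms of de Graaf and of Dietrich--Faccin--de Graaf. Next I would realize the pushforward $i_*[H_U/K_H]$ in concrete cohomological terms: both $G_U/K$ and $H_U/K_H$ are compact homogeneous spaces whose real cohomology is computable from the Lie-theoretic data, so $i_*$ can be described as the dual of the restriction map $i^*\colon H^*(G_U/K,\mathbb{R})\to H^*(H_U/K_H,\mathbb{R})$ together with the fundamental classes. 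Equivalently, pairing against invariant forms, $\int_{H_U/K_H} i^*\alpha$ for a basis of invariant $p$-forms $\alpha$ on $G_U/K$ determines the class completely, and $i_*[H_U/K_H]=0$ iff all these periods vanish. I would then encode $i^*$ as a linear map between the relevant graded pieces, compute it in exact arithmetic, and check whether the target class is zero; this is exactly what the algorithms of the paper are designed to detect, so Theorem \ref{thm:main} follows by running the program on every tabulated pair and reporting that the obstruction class vanishes in each case.

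The main obstacle is twofold. First, one must make the map $i^*$ genuinely computable: this requires an explicit and \emph{correct} identification of the invariant-form models for $H^*(G_U/K,\mathbb{R})$ and $H^*(H_U/K_H,\mathbb{R})$, compatible restriction/branching data for the embedding $\mathfrak{h}\hookrightarrow\mathfrak{g}$ lifted to the compact duals, and a matching of fundamental classes so that orientation and normalization conventions do not spuriously change whether a period is zero. Getting the embedding data right at the level of the compact symmetric pairs $K_H\subset K$ and $K_H\subset H_U$, consistent with the chosen $\mathfrak{h}\subset\mathfrak{g}$, is delicate and is where most of the bookkeeping lives. Second, because the verification is by numerical experiment, I would guard against false positives from floating-point cancellation by performing the linear-algebra step over $\mathbb{Q}$ (or an appropriate algebraic extension) whenever the structure constants are rational, so that a reported zero period is exact rather than an artifact of rounding; where exact arithmetic is infeasible I would certify nonvanishing/vanishing with rigorous error bounds. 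Once the restriction map and the period computation are certified, the theorem reduces to a finite, mechanical check over the entries of the tables in Section \ref{sec:tables}.
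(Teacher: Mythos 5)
Your opening reduction coincides with the paper's: by Theorems \ref{thm:omega} and \ref{thm:vanishing-omega}, the nonexistence of compact Clifford--Klein forms on $G/H$ follows once $i_*[H_U/K_H]=0$ in $H_d(G_U/K,\mathbb{R})$, where $d=\dim H_U/K_H=d(H)$. From that point on, however, your plan diverges and contains a genuine gap. You propose to compute the class $i_*[H_U/K_H]$ itself, by making the restriction map $i^*\colon H^*(G_U/K,\mathbb{R})\to H^*(H_U/K_H,\mathbb{R})$ explicit and evaluating the periods $\int_{H_U/K_H}i^*\alpha$, and you assert that ``this is exactly what the algorithms of the paper are designed to detect.'' That assertion is false: the paper's algorithms never compute $i^*$, branching data, or periods. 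They check a much coarser sufficient condition (Theorem \ref{thm:d}): that the \emph{entire} group $H^{d}(G_U/K,\mathbb{R})$ vanishes, i.e.\ that the coefficient of degree $d=d(H)$ in the Poincar\'e polynomial of $G_U/K$ is zero. This coefficient is computable with no embedding data whatsoever, because $G_U/K$ is a compact symmetric space, hence formal (Theorem \ref{thm:symm-formal}), so its Poincar\'e polynomial is given purely in terms of the exponents of $G_U$ and $K$ (Theorem \ref{thm:poincare}, Corollary \ref{cor:algor-base}); the only embedding-dependent ambiguity --- the matching of exponents when $\operatorname{rank}G>\operatorname{rank}K$ --- is eliminated by demanding that the coefficient vanish for \emph{every} admissible permutation of the exponents. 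The decisive idea you miss is precisely this: the criterion depends only on the isomorphism classes of $\mathfrak{g}$ and $\mathfrak{h}$ (through the exponents and the intrinsic number $d(H)=\dim\mathfrak{p}_H$), so the delicate bookkeeping of embeddings, orientations and normalizations that you correctly identify as the ``main obstacle'' simply never arises, and the check reduces to exact integer arithmetic on polynomial coefficients.

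Because your plan does require the embedding data, it cannot be completed from the information actually available in the tables: the pairs $(\mathfrak{g},\mathfrak{h})$ are produced by the classification algorithms of \cite{DFG} up to conjugacy, and the explicit lift of $\mathfrak{h}\hookrightarrow\mathfrak{g}$ to compatible compact pairs $K_H\subset K$, $K_H\subset H_U$ together with the induced map on invariant forms is exactly the step you acknowledge but never supply; delegating it to a program that does not perform it leaves the proof incomplete at its computational heart. It is worth noting that your test, if it were actually implemented, would be strictly sharper than the paper's --- it could certify $i_*[H_U/K_H]=0$ even when $H^{d}(G_U/K,\mathbb{R})\neq 0$ --- but that is a different (and much heavier) project, not a proof of Theorem \ref{thm:main} as stated, and not the route the paper takes. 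Finally, a minor structural point: the paper's algorithms first prune pairs with $\operatorname{rank}_{\mathbb{R}}\mathfrak{g}=\operatorname{rank}_{\mathbb{R}}\mathfrak{h}$ via the Calabi--Markus phenomenon (Theorem \ref{thm:c-m}) before applying the cohomological test, a step absent from your proposal.
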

Note that our algorithm does not find  all $G/H$ with no compact Clifford-Klein forms. Indeed, there exist homogeneous spaces for which  Tholozan's obstruction vanishes, but still they have no Clifford-Klein forms. Also, we use \cite{DFG} in the simplest possible cases of regular subalgebras in $E_{6(6)}$ and split Lie algebras $\mathfrak{g}$. This already yields large families. On the other hand, generating pairs $(\mathfrak{g},\mathfrak{h})$ by the algorithms \cite{DFG} clearly may yield much more examples. Note that we choose exceptional Lie groups just for simplicity, because the algorithms are applicable to the classical case as well. 

 Finally let us mention an easy but interesting result formulated below, which is a kind of byproduct of our analysis and does not use a computational approach.  
\begin{theorem}\label{thm:sl2}
Let $G$ be a semisimple connected linear Lie group. Assume that the maximal compact subgroup $K$ is semisimple. For any  monomorphism $SL(2,\mathbb{R})\hookrightarrow G$ the homogeneous space $G/SL(2,\mathbb{R})$ does not have compact Clifford-Klein forms.
\label{c2}
\end{theorem}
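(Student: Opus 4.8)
The plan is to prove that Tholozan's invariant form $\omega_{G,H}$ from Theorem \ref{thm:omega} vanishes identically. By that theorem this forces $\mathrm{vol}(\Gamma\backslash G/H)=|\int_{[\Gamma]}\omega_{G,H}|=0$ for every torsion-free discrete $\Gamma$ acting properly discontinuously, which is incompatible with $\Gamma\backslash G/H$ being a nonempty compact manifold; hence no compact Clifford-Klein form can exist. First I would pin down the geometry of $H=SL(2,\mathbb{R})$. Its maximal compact subgroup is $K_H=SO(2)\cong S^1$, and in the compact dual $H_U=SU(2)$ the Lie algebra $\mathfrak{so}(2)$ of $K_H$ is a Cartan subalgebra of $\mathfrak{su}(2)$; hence $K_H$ is a maximal torus of $SU(2)$ and $H_U/K_H=SU(2)/SO(2)\cong S^2$. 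In particular the cycle $i_*[H_U/K_H]$ appearing in Theorem \ref{thm:vanishing-omega} lives in the two-dimensional homology $H_2(G_U/K;\mathbb{R})$.

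Next I would invoke Theorem \ref{thm:vanishing-omega}, which identifies the cohomology class $\tfrac{1}{\mathrm{vol}(G_U/H_U)}[\omega^U_{G,H}]\in H^2(G_U/K;\mathbb{R})$ with the Poincar\'e dual of $i_*[S^2]$. Thus it suffices to show this homology class is zero, and for this I would prove the stronger statement $H_2(G_U/K;\mathbb{R})=0$, which is exactly where the hypothesis that $K$ is semisimple enters. Since $G$ is semisimple, $G_U/K$ is a compact symmetric space splitting as a product of irreducible compact symmetric spaces $G_U^{(j)}/K^{(j)}$, with $K=\prod_j K^{(j)}$ and $K$ semisimple if and only if each $K^{(j)}$ is. Because $G_U$ is semisimple, each $H^1(G_U^{(j)}/K^{(j)};\mathbb{R})=0$, so by K\"unneth $H^2(G_U/K;\mathbb{R})\cong\bigoplus_j H^2(G_U^{(j)}/K^{(j)};\mathbb{R})$. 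For $G_U$ compact semisimple and $K$ connected one has $H^2(G_U/K;\mathbb{R})\cong\mathfrak{z}(\mathfrak{k})$ (equivalently, among the irreducible factors $H^2\neq0$ occurs precisely for the Hermitian symmetric ones, whose isotropy carries a central circle). As $K$ is semisimple, $\mathfrak{z}(\mathfrak{k})=0$, so $H^2(G_U/K;\mathbb{R})=0$ and hence $H_2(G_U/K;\mathbb{R})=0$ by Poincar\'e duality.

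Combining these steps gives $i_*[S^2]=0$, hence $[\omega^U_{G,H}]=0$ in $H^2(G_U/K;\mathbb{R})$. To pass from the vanishing of the class to the vanishing of the form I would use that $G_U/K$ is a \emph{compact} symmetric space: by Cartan's theorem every $G_U$-invariant form on $G_U/K$ is harmonic, so $\alpha\mapsto[\alpha]$ embeds the algebra of invariant forms into $H^*(G_U/K;\mathbb{R})$; therefore $[\omega^U_{G,H}]=0$ yields $\omega^U_{G,H}=0$, and since $\alpha\mapsto\alpha^U$ is an isomorphism between the invariant forms on $G/K$ and on $G_U/K$ we conclude $\omega_{G,H}=0$. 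The step requiring genuine care is this final implication rather than the homotopy-theoretic bookkeeping: the obstruction is a priori only a cohomological quantity, and it is the special feature of the compact dual (invariant $\Leftrightarrow$ harmonic) together with Tholozan's transfer $\alpha\mapsto\alpha^U$ that upgrades ``cohomologically trivial'' to ``identically zero,'' which is precisely what Theorem \ref{thm:omega} requires. (Alternatively one could route the whole argument through Morita's theorem above, showing that $\pi^{\ast}\colon H^*(G_U/H_U;\mathbb{R})\to H^*(G_U/K_H;\mathbb{R})$ fails to be injective on the class pulled back from the $S^2$-fibre, but the approach above is more direct given Theorems \ref{thm:omega} and \ref{thm:vanishing-omega}.)
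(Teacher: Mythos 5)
Your proposal is correct and takes essentially the same route as the paper: both arguments hinge on the identification $H_U/K_H\cong SU(2)/SO(2)\cong S^2$ and on the vanishing $H^2(G_U/K;\mathbb{R})=0$ for semisimple $K$ (the Biswas--Chatterjee identification of $H^2$ with the center of $\mathfrak{k}$, which your Hermitian-symmetric/K\"unneth argument reproves), combined with Theorems \ref{thm:omega} and \ref{thm:vanishing-omega}. The differences are cosmetic: you argue the contrapositive (the form vanishes, hence any quotient has volume zero) and make explicit the Cartan-harmonicity step that the paper leaves implicit in ``Theorems \ref{thm:omega} and \ref{thm:vanishing-omega} applied together,'' while your placement of $[\omega^U_{G,H}]$ in $H^2$ rather than in $H^{\dim(G_U/K)-2}$ is a harmless degree slip, since the argument only uses that this class is Poincar\'e dual to $i_*[S^2]=0$.
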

 
   To put this observation into the general context, recall the following.
\begin{itemize}
	\item The space $SL(3,\mathbb{R})/SL(2,\mathbb{R})$ does not have compact Clifford-Klein forms for any embedding $SL(2,\mathbb{R})\subset SL(3,\mathbb{R})$ (see \cite{BEN}).
	\item The space $SL(m,\mathbb{R})/SL(2,\mathbb{R}),$ $m\geq 5$ does not have compact Clifford-Klein forms for a standard embedding $SL(2,\mathbb{R})\subset SL(m,\mathbb{R})$ (see \cite{LZ}).
	\item The space $SL(4,\mathbb{R})/SL(2,\mathbb{R}),$  does not have compact Clifford-Klein forms for a standard embedding $SL(2,\mathbb{R})\subset SL(4,\mathbb{R})$ (see \cite{SH}).
	\item The space $SL(m,\mathbb{R})/f(SL(2,\mathbb{R})),$ $m\geq 4$ does not have compact Clifford-Klein forms where $f:SL(2,\mathbb{R})\rightarrow SL(m,\mathbb{R})$ is a real, n-dimensional irreducible representation (see \cite{MA} and \cite{OH}).
\end{itemize} 
\noindent {\bf Acknowledgement}. We thank Willem de Graaf for his help in getting some literature sources.
\section{Computational setup}\label{sec:comp}
We have implemented the algorithms in the language of the computer algebra system GAP4.8 \cite{GAP} using the package NoCK \cite{BJSTW}. We create pairs (Lie algebra, semisimple subalgebra) using the database CoReLG \cite{DFG1} and SLA \cite{G}. 
\section{Preliminaries}\label{sec:prelim}
 Throughout this paper we use the basics of Lie theory without further explanations. One can consult \cite{O1}. We denote Lie groups by $G, H,...$, and their Lie algebras by the corresponding Gothic letters $\mathfrak{g},\mathfrak{h}...$.  The symbol $\mathfrak{g}^{\mathbb{C}}$ denotes the complexification of a real Lie algebra $\mathfrak{g}$. Our notation is in accordance with \cite{O1}.

In this article we use various results dealing with  homogeneous spaces $G/H$ of a {\it reductive type}. Recall this notion following \cite{Kob89}. By a real reductive linear Lie group $G$ we mean a real linear Lie group contained in a connected complex reductive Lie group $G^{\mathbb{C}}$ whose Lie algebra $\mathfrak{g}^{\mathbb{C}}$ is isomorphic to $\mathfrak{g}\otimes_{\mathbb{R}}\mathbb{C}$. Without further explanations we use  the notions of the {\it Cartan involution} $\theta$ and the {\it Cartan decomposition} $\mathfrak{g}=\mathfrak{k}+\mathfrak{p}$.  Let $H$ be a closed subgroup of a real reductive linear group $G$ and by $K$ denote a maximal compact subgroup in $G$. We say that $H$ is reductive in $G$ and $G/H$ is of a {\it reductive type} if there exists a Cartan involution $\theta$ of $G$ such that $G$ has a polar decomposition $H=(H\cap K)\cdot\exp(\mathfrak{h}\cap\mathfrak{p})$ and connected Lie subgroup corresponding to $\mathfrak{h}^{\mathbb{C}}$ is closed in $\operatorname{Int}(\mathfrak{g}^{\mathbb{C}})$. Note that semisimple linear real Lie groups are reductive. Our basic assumption in this article is that $G$ is simple and linear.

Using the Cartan involution, one can write the {\it compatible Cartan decomposition}
  $$\mathfrak{h}=\mathfrak{k}_H+\mathfrak{p}_H, \ \mathfrak{k}_H=\mathfrak{k}\cap\mathfrak{h}, \ \mathfrak{p}_H=\mathfrak{p}\cap\mathfrak{h},$$
  on the Lie algebra level. Throughout this article we write 
  $$d(G):=\dim\mathfrak{p},\,d(H):=\dim\mathfrak{p}_H.$$
 These numbers are often called {\it non-compact dimensions} of $G$ and $H$.
 
   Let $\mathfrak{b}\subset\mathfrak{p}$ be a maximal abelian subspace in $\mathfrak{p}$. All such subspaces are conjugate by some automorphism of the form $\operatorname{Ad}\,k$ for some $k\in K$. The dimension $\dim\mathfrak{b}$ is called the real rank of $\mathfrak{g}$. The real rank of a real Lie algebra $\mathfrak{g}$ will be denoted by $\operatorname{rank}_{\mathbb{R}}\mathfrak{g}$.  
   
   For simplicity of the implementation of the algorithms we consider examples of {\it split} semisimple Lie algebras (see \cite{DFG} and \cite{O1}). These are defined as real forms of complex semisimple Lie algebras $\mathfrak{g}^{\mathbb{C}}$ generated by the Chevalley bases of $\mathfrak{g}^{\mathbb{C}}$ over the reals.
  
  Let $X$ be a Hausdorff topological space and $\Gamma$ a topological group acting on $X$. We say that an action of $\Gamma$ on $X$ is {\it proper} if for any compact subset $S\subset X$ the set
  $$\{\gamma\in\Gamma\,|\,\gamma(S)\cap S\not=\emptyset\}$$
  is compact. In particular, this article is devoted to the proper actions of a discrete subgroup $\Gamma\subset G$ on $G/H$ by left translations, and for simplicity we assume that the action is free. 
  Since we  consider homogeneous spaces $G/H$ of a reductive type  and assume that $G$ is connected and (semi)simple, we know that every $G/H$ has a dual $G_U/H_U$, which we now describe (see \cite{KO}, Section 3).  
   Let $G_{U}$ be a compact real form of a (connected) complexification $G^{\mathbb{C}}$ of $G$ and let $H_{U}$ be a compact real form of $H^{\mathbb{C}}\subset G^{\mathbb{C}}$. The space $G_{U}/H_{U}$ is called the  homogeneous space of compact type associated with $G/H$ (or dual to $G/H$).
   
   Our algorithms use also the well known Calabi-Markus phenomenon (see Theorem \ref{thm:c-m}).
   \begin{theorem}[\cite{Kob89}]\label{thm:c-m} Let $G/H$ be a non-compact homogeneous space of a reductive type and $\operatorname{rank}_{\mathbb{R}}\mathfrak{g}=\operatorname{rank}_{\mathbb{R}}\mathfrak{h}$, then $G/H$ does not have compact Clifford-Klein forms.
   \end{theorem}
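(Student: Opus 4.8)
The plan is to deduce the statement from the general theory of proper actions: I will show that the rank equality forces every discrete subgroup $\Gamma\subset G$ acting properly on $G/H$ to be finite, and then observe that a non-compact $G/H$ cannot have a compact quotient by a finite group. Thus no $\Gamma$ can produce a compact Clifford-Klein form.

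First I would use the hypothesis to place a maximal $\mathbb{R}$-split torus of $G$ inside $H$. Pick a maximal abelian subspace $\mathfrak{b}_H\subset\mathfrak{p}_H$, so that $\dim\mathfrak{b}_H=\operatorname{rank}_{\mathbb{R}}\mathfrak{h}$, and extend it to a maximal abelian subspace $\mathfrak{b}\subset\mathfrak{p}$ (possible since $\mathfrak{p}_H\subset\mathfrak{p}$), so that $\dim\mathfrak{b}=\operatorname{rank}_{\mathbb{R}}\mathfrak{g}$. The assumption $\operatorname{rank}_{\mathbb{R}}\mathfrak{g}=\operatorname{rank}_{\mathbb{R}}\mathfrak{h}$ gives $\mathfrak{b}_H=\mathfrak{b}$, hence $\mathfrak{b}\subset\mathfrak{p}_H\subset\mathfrak{h}$ and $A:=\exp(\mathfrak{b})\subset H$. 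Writing the Cartan decomposition $G=KAK$, let $\mu:G\to\overline{\mathfrak{b}^+}$ denote the associated projection onto the closed positive Weyl chamber. For $a\in A$ the element $\mu(a)$ is the dominant representative of $\log a$, so $\mu(A)=\overline{\mathfrak{b}^+}$; since $A\subset H$ and $\mu$ always takes values in $\overline{\mathfrak{b}^+}$, we obtain $\mu(H)=\overline{\mathfrak{b}^+}$, the entire chamber.

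Now I would invoke Kobayashi's properness criterion from \cite{Kob89}: if $\Gamma$ acts properly on $G/H$ then $\mu(\Gamma)$ and $\mu(H)$ are properly separated, i.e. for every bounded $B\subset\mathfrak{b}$ the set $\{\gamma\in\Gamma:\mu(\gamma)\in\mu(H)+B\}$ is relatively compact. Because $\mu(H)$ is the full chamber and $\mu(\Gamma)\subset\overline{\mathfrak{b}^+}=\mu(H)$, this set is all of $\Gamma$; properness therefore forces $\mu(\Gamma)$ to be bounded, whence $\Gamma$ is relatively compact, and being discrete it is finite. Finally, if $\Gamma$ is finite and acts freely, then $G/H\to\Gamma\backslash G/H$ is a finite-sheeted covering, so compactness of $\Gamma\backslash G/H$ would force $G/H$ to be compact, contradicting the assumption that $G/H$ is non-compact. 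Hence $G/H$ admits no compact Clifford-Klein form.

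The main obstacle is the properness criterion itself: converting the topological condition ``proper action'' into the metric separation statement about the Cartan projections $\mu(\Gamma)$ and $\mu(H)$ is the genuine content, and is exactly what \cite{Kob89} supplies. Granting it, the rank hypothesis trivializes the criterion by making $\mu(H)$ the whole chamber, so that the separation condition can only hold for finite $\Gamma$; everything else is the short deduction above.
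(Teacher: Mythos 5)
Your proof is correct, and it should be said up front that the paper itself offers no proof of this statement: it is quoted from \cite{Kob89} (the Calabi--Markus phenomenon, Corollary 4.4 there), so the comparison is with Kobayashi's argument rather than with anything internal to the paper. Your reconstruction is essentially that argument: rank equality forces a maximal abelian $\mathfrak{b}\subset\mathfrak{p}$ to lie in $\mathfrak{h}$ (your step $\mathfrak{b}_H=\mathfrak{b}$ is fine, since an abelian subspace of $\mathfrak{p}_H\subset\mathfrak{p}$ extends to a maximal one in $\mathfrak{p}$), hence $A\subset H$, hence any properly discontinuous $\Gamma$ is finite, and a finite covering group cannot compactify a non-compact $G/H$. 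Two remarks. First, an attribution nuance: the separation criterion you invoke for arbitrary \emph{discrete} $\Gamma$ --- that properness of the action forces $\{\gamma\in\Gamma:\mu(\gamma)\in\mu(H)+B\}$ to be finite for bounded $B$ --- is, in that generality, due to Benoist \cite{BEN} and Kobayashi (1996), not literally to \cite{Kob89}, whose Theorem 4.1 treats reductive subgroups; however, only the easy direction is needed here, so nothing breaks. Second, your rank hypothesis lets you bypass the Cartan-projection formalism entirely: from $A\subset H$ one gets $G=KAK\subset KHK$, so with $S=K\cdot o\subset G/H$ every $\gamma\in\Gamma$ satisfies $\gamma S\cap S\neq\emptyset$, and properness of the action on the compact set $S$ directly forces $\Gamma$ to be compact, hence finite --- this is Kobayashi's original deduction, and it is shorter than routing through the separation statement, which in your version anyway collapses (with $B=\{0\}$ and $\mu(H)=\overline{\mathfrak{b}^+}$) to exactly this finiteness claim. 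So: correct proof, same essential mechanism as the cited source, dressed in a slightly heavier later formalism.
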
 
   
   In this paper we consider the homology and cohomology over the reals. If $f:X\rightarrow Y$ is a continuous map, we write $f_*: H_*(X,\mathbb{R})\rightarrow H_*(Y,\mathbb{R})$ for the map induced in homology and $f^*: H^*(Y,\mathbb{R})\rightarrow H^*(X,\mathbb{R})$ for the map induced in cohomology. If $X$ is a manifold of dimension $n$ we denote by $[X]\in H_n(X,\mathbb{R})$ its fundamental class. For any topological space $X$ we denote by $P(X,t)$ the Poincar\'e polynomial of $X$. Note that all topological notions and facts we need can be found in \cite{FHT}.
\section{On generating semisimple subalgebras of real simple Lie algebras}\label{sec:subalgebras}

Let $\mathfrak{g}^{\mathbb{C}}$ be a simple complex Lie algebra and $\mathfrak{a}^{\mathbb{C}}$ be a Cartan subalgebra. We say that a subalgebra $\mathfrak{h}^{\mathbb{C}}$ is regular with respect to $\mathfrak{a}^{\mathbb{C}}$, if $[\mathfrak{a}^{\mathbb{C}},\mathfrak{h}^{\mathbb{C}}]\subset\mathfrak{h}^{\mathbb{C}}$. It is well 
known  that all Cartan subalgebras in $\mathfrak{g}^{\mathbb{C}}$ are conjugate, therefore, there is no need of specifying them. Let $\mathfrak{g}$ be a real non-compact simple Lie algebra with a Cartan decomposition $\mathfrak{g}=\mathfrak{k}+\mathfrak{p}$ and associated Cartan involution $\theta$. Let $G$ be an adjoint group of $\mathfrak{g}$. It is known that every Cartan subalgebra in $\mathfrak{g}$ is $G$-
conjugate to a $\theta$-stable Cartan subalgebra. By well known classical results of Sugiura and Kostant, up to $G$-conjugacy, there are a finite number of $\theta$-stable Cartan subalgebras. A subalgebra $\mathfrak{h}$ of $\mathfrak{g}$ is called regular with respect to a Cartan subalgebra $\mathfrak{a}$, if $[\mathfrak{a}^{\mathbb{C}},\mathfrak{h}^{\mathbb{C}}]\subset\mathfrak{h}^{\mathbb{C}}$. Dynkin \cite{D} developed an algorithm to classify the 
regular semisimple subalgebras of $\mathfrak{g}^{\mathbb{C}}$  up to a conjugacy by the inner automorphism group. Dietrich, Faccin and de Graaf \cite{DFG} created an algorithm to classify the regular semisimple subalgebras of a real simple Lie algebra $\mathfrak{g}$ (with respect to any $\theta$-stable Cartan subalgebra).  This algorithm has been implemented in the software package CoRelG \cite{DFG1} for the computer algebra system GAP \cite{GAP}. We use these results in implementing our algorithm of computing Tholozan's obstruction. We create regular subalgebras $\mathfrak{h}$ according to \cite{DFG} and apply our algorithm.      
 
\section{Formality}
In this work we use a homotopic property called formality. It is an important  property of topological spaces. For instance, the rational homotopy type of a formal topological space is determined by its cohomology algebra over $\mathbb{Q}$ \cite{FHT}. 
\subsection{Formality property} Explaining the material of this section we follow \cite{FHT}. Here $\mathbb{K}$ denotes any field of zero characteristic. We consider the category of commutative graded differential algebras (or, in the terminology of \cite{FHT}, cochain algebras). If $(A,d)$ is a differential graded algebra with a grading $S=\oplus_pA^p$, the degree $p$ of $a\in A^p$ is denoted by $|a|$. Given a vector space $V$, consider the algebra
$$\Lambda V=S(V^{\text{even}})\otimes \Lambda(V^{\text{odd}});$$
that is, $\Lambda V$ denotes a free algebra which is a tensor product of a symmetric algebra over the vector space $V^{\text{even}}$ of elements of even degrees, and an exterior algebra over the vector space $V^{\text{odd}}$ of elements of odd degrees. 
We will use the following notation:
\begin{itemize}
\item by $\Lambda V^{\leq p}$ and $\Lambda V^{>p}$ we denote the subalgebras generated by elements of degree $\leq p$ and of degree $>p$, respectively;
\item if $v\in V$ is a generator, $\Lambda v$ denotes the subalgebra generated by $v\in V$;
\item $\Lambda^pV=\langle v_1\cdots v_p\rangle,\Lambda^{\geq q}V=\oplus_{i\geq q}\Lambda^iV,\,\Lambda^+V=\Lambda^{\geq 1}V.$
\end{itemize}
\begin{definition} A {\it Sullivan algebra} is a commutative graded differential algebra of the form $(\Lambda V,d)$, where
\begin{itemize}
\item $V=\oplus_{p\geq 1}V^p$,
\item $V$ admits an increasing filtration
$$V(0)\subset V(1)\subset \cdots\subset V=\cup_{k=0}^\infty V(k)$$
with the property $d=0$ on $V(0)$, $d: V(k)\rightarrow \Lambda V(k-1),\,k\geq 1$.
\end{itemize}
\end{definition}
\begin{definition} A Sullivan algebra $(\Lambda V,d)$ is called {\it minimal} if
$$\operatorname{Im}\,d\subset\Lambda^+V\cdot\Lambda^+V.$$
\end{definition}
\begin{definition} A {\it Sullivan model} of a commutative graded differential algebra $(A,d_A)$ is a morphism 
$$m: (\Lambda V,d)\rightarrow (A,d_A)$$
inducing an isomorphism $m^*: H^*(\Lambda V,d)\rightarrow H^*(A,d_A)$.
\end{definition}
If $X$ is a $CW$-complex, there is a cochain algebra $(A_{PL}(X),d)$ of polynomial differential forms. For a smooth manifold $X$ we take a smooth triangulation of $X$ and as the model of $X$ we take the Sullivan model of $A_{PL}(X)$. If it is minimal, it is called the {\it Sullivan minimal model} of $X$. It is known \cite{FHT} that any $CW$-complex admits a Sullivan minimal model, say $(\mathcal{M}_X,d)$, which is unique up to isomorphism.
\begin{definition} Two graded differential algebras $(A,d_A)$ and $(B,d_B)$ are called {\it quasi-isomorphic} if there is a chain of morphisms 
$$(A,d_A)\rightarrow (A_1,d_1)\rightarrow \cdots\leftarrow (A_i,d_i)\rightarrow \cdots\leftarrow (B,d_B)$$
inducing isomorphisms in cohomology.
\end{definition}
\begin{definition} We say that a $CW$-complex is {\it formal} if the Sullivan minimal model $(\mathcal{M}_X,d)$ is quasi-isomorphic to the graded differential algebra $(H^*(\mathcal{M}_X,0))$. 
\end{definition}   
Note that not all $CW$ complexes are formal. For example, non-vanishing Massey products are an obstruction to formality \cite{TO}. 

On the other hand, in our arguments we will need the following.
\begin{theorem}[\cite{TO}, Example 3.3]\label{thm:symm-formal} Compact Riemannian symmetric spaces are formal.
\end{theorem}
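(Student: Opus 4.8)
The plan is to test formality with the de Rham complex: since we work over $\mathbb{R}$, a field of characteristic zero, a compact manifold $M$ is formal precisely when its de Rham algebra $(\Omega^*(M),d)$ is quasi-isomorphic to $(H^*(M;\mathbb{R}),0)$. So I would write a compact Riemannian symmetric space as $M=U/K$, where $U$ is a compact connected Lie group acting by isometries and $K$ is the isotropy group of a base point $o=eK$, and then exhibit an explicit subalgebra of $\Omega^*(M)$ on which the differential is identically zero and which already carries the whole cohomology.

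First I would pass to the subalgebra $\Omega^*(M)^U\subset\Omega^*(M)$ of $U$-invariant differential forms. This is a graded subalgebra closed under the wedge product, and since $U$ preserves $d$, it is a subcomplex. The crucial step is to show that $d$ vanishes identically on $\Omega^*(M)^U$. For this I would use the geodesic symmetry $s=s_o$ at the base point, which on a Riemannian symmetric space is a \emph{global} isometry with $ds|_o=-\mathrm{id}$ on $T_oM$ and which conjugates the $U$-action into itself, through the defining involution $\sigma$, via $s\circ L_u=L_{\sigma(u)}\circ s$; consequently $s^*$ preserves $\Omega^*(M)^U$. Because a $U$-invariant form is determined by its value at $o$, and $ds|_o=-\mathrm{id}$ induces multiplication by $(-1)^k$ on $\Lambda^kT_o^*M$, every invariant $k$-form $\omega$ satisfies $s^*\omega=(-1)^k\omega$. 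Applying this both to $\omega$ and to the invariant form $d\omega$, and using $s^*d=ds^*$, I obtain $(-1)^k d\omega=d(s^*\omega)=s^*(d\omega)=(-1)^{k+1}d\omega$, which forces $d\omega=0$. Hence $(\Omega^*(M)^U,0)$ is a cochain algebra with zero differential.

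Next I would invoke the classical theorem of E.~Cartan that, for a compact connected Lie group $U$, the inclusion $\iota:(\Omega^*(M)^U,d)\hookrightarrow(\Omega^*(M),d)$ induces an isomorphism in cohomology (invariant forms, produced by averaging over $U$, represent every de Rham class, and on a symmetric space they coincide with the harmonic forms). Combined with the previous step, $\iota$ yields an isomorphism of graded algebras $\Omega^*(M)^U\xrightarrow{\ \sim\ }H^*(M;\mathbb{R})$, and since $(\Omega^*(M)^U,0)$ has zero differential its cohomology is the algebra itself. Therefore the chain
$$(H^*(M;\mathbb{R}),0)\;\cong\;(\Omega^*(M)^U,0)\xrightarrow{\ \iota\ }(\Omega^*(M),d)$$
is a quasi-isomorphism, so the de Rham algebra is formal, and hence so is $M$ in the sense of the definition above.

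The main obstacle is the vanishing of $d$ on invariant forms, i.e.\ making the symmetry argument rigorous: one must verify that the geodesic symmetry is genuinely a global isometry of the symmetric space and that it carries $U$-invariant forms to $U$-invariant forms, which is exactly where the symmetric (rather than merely homogeneous) structure enters. An equivalent analytic phrasing replaces invariant forms by harmonic forms and rests on the special fact that, on a compact symmetric space, the wedge product of harmonic forms is again harmonic, so that the harmonic forms form a subalgebra with zero differential computing $H^*(M;\mathbb{R})$; checking this closure under wedge is the analytic counterpart of the same obstacle. Once this step is secured, Cartan's theorem and the definition of formality close the argument routinely.
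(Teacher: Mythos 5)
Your argument is correct and is precisely the classical proof behind the paper's citation: the paper itself gives no proof, referring to \cite{TO}, Example 3.3, where formality of compact symmetric spaces is established by the same mechanism you use --- the geodesic symmetry forces $d=0$ on $U$-invariant forms, and averaging (E.~Cartan's theorem) makes the inclusion $(\Omega^*(M)^U,0)\hookrightarrow(\Omega^*(M),d)$ a quasi-isomorphism, exhibiting the de Rham algebra as quasi-isomorphic to its cohomology with zero differential. Your identification over $\mathbb{R}$ of formality with de Rham formality is consistent with the paper's conventions (real coefficients and the Sullivan model of $A_{PL}$ of a smooth manifold), so no gap remains.
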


\subsection{Cohomology and formality of compact homogeneous spaces}
In this subsection we recall the classical results on the real cohomology and formality of compact homogeneous spaces in the suitable  form \cite{T}. Note that here (and only here) we assume that  $G$ is a {\it compact} connected Lie group and $H$ is a closed subgroup. Since we consider only cohomology and homology with real coefficients we omit the field of coefficents in the notation. The cohomology algebra $H^*(G)$ is an exterior algebra over vector space $P_G$ generated by primitive elements $y_1,...,y_l$ of odd degrees $2p_1-1,...,2p_l-1$, where $l=\operatorname{rank}\,G$. These generators can be chosen in such a way, that the cohomology algebra of the classifying space $BG$ is isomorphic to the polynomial algebra $\mathbb{R}[x_1,...,x_l]$, where elements $y_j$ are cohomology classes in the universal space $EG$ containing coboundaries of the transgression for $x_j$.  Choose maximal tori $T_H\subset T$ of $H$ and $G$ respectively and their Lie algebras $\mathfrak{t}_H\subset\mathfrak{t}$. It is well known that the cohomology algebras $H^*(BG)$ and $H^*(BH)$ are isomorphic to the polynomial algebras of the invariants of the Weyl groups $W(G)$ and $W(H)$ respectively. Denote these rings as $\mathbb{R}[\mathfrak{t}]^{W(G)}$ and $\mathbb{R}[\mathfrak{t}_H]^{W(H)}$. It is a classical result that the canonical homomorphism $\rho^*(H,G): H^*(BG)\rightarrow H^*(BH)$ induced by the inclusion $H\hookrightarrow G$ sends any invariant polynomial from $H^*(BG)$ to its restriction on $H^*(BH)$. 
Introduce the following notation:
$$H^*(BG)\cong\mathbb{R}[\mathfrak{t}]^{W(G)}=\mathbb{R}[f_1,...,f_l],$$
$$H^*(BH)\cong \mathbb{R}[\mathfrak{t}_H]^{W(H)}=\mathbb{R}[u_1,...,u_m],$$
where $f_j$ are $W(G)$-invariant polynomials over $\mathfrak{t}$, and $u_k$ are $W(H)$-invariant polynomials over $\mathfrak{t}_H$, generating the rings of invariants $\mathbb{R}[\mathfrak{t}]^{W(G)}$ and $\mathbb{R}[\mathfrak{t}_H]^{W(H)}$, respectively.
The cohomology algebra of $G/H$ can be calculated with the use of the following.
\begin{theorem}[\cite{T}, Theorem 8]\label{thm:g/h} Assume that $f_1,...,f_l$ are generators of the ring $\mathbb{R}[\mathfrak{t}]^{W(G)}$ and assume that $m=\operatorname{rank}\,H$.  Let $j^*:\mathbb{R}[\mathfrak{t}]^{W(G)}\rightarrow \mathbb{R}[\mathfrak{t}_H]^{W(H)}$ denote the restriction map $j^*(f)=f|_{\mathfrak{t}_H}$ (which corresponds to $\rho^*(G,H)$ under the suitable isomorphisms). If $j^*(f_{m+1}),...,j^*(f_l)$ belong to the ideal $(j^*(f_1),...,j^*(f_m))$ generated by polynomials \newline$j^*(f_1),...,j^*(f_m)$, then
\begin{itemize}
\item $H^*(G/H)=(\mathbb{R}[\mathfrak{t}_H]^{W(H)}/(j^*(f_1),...,j^*(f_m))\otimes\Lambda (x_{m+1},...,x_l),$
\item $G/H$ is formal.
\end{itemize}
\end{theorem}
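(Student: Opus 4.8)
The plan is to identify a small commutative differential graded algebra that computes $H^*(G/H)$ — the Cartan–Koszul model of the bundle $G/H\to BH\xrightarrow{B\iota}BG$ — and then to read off both the ring structure and formality from its internal structure. First I would record the model. Since $BG$ and $BH$ have polynomial real cohomology, $(\mathbb{R}[f_1,\dots,f_l],0)$ and $(\mathbb{R}[u_1,\dots,u_m],0)$ are their minimal models and $B\iota$ is modelled by $j^*$. The acyclic (Koszul) closure of $(\mathbb{R}[f_1,\dots,f_l],0)$ is $(\mathbb{R}[f_1,\dots,f_l]\otimes\Lambda(y_1,\dots,y_l),D)$ with $Dy_i=f_i$, exact because the $f_i$ form a regular sequence, and it models the contractible space $EG$; pulling back along $j^*$ yields the relative model of the fibre
\[
A=\bigl(\mathbb{R}[\mathfrak t_H]^{W(H)}\otimes\Lambda(y_1,\dots,y_l),\,d\bigr),\qquad dy_i=j^*(f_i),\quad d|_{\mathbb{R}[\mathfrak t_H]^{W(H)}}=0,
\]
which is quasi-isomorphic to $A_{PL}(G/H)$ by the classical theorem of H. Cartan (see \cite{FHT}). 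From here everything is algebra.

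Next I would use the hypothesis to split $A$. Writing $R=\mathbb{R}[\mathfrak t_H]^{W(H)}$ and $a_i=j^*(f_i)$, the assumption gives $a_i=\sum_{k\le m}g_{ik}a_k$ with $g_{ik}\in R$ for each $i>m$. Setting $\bar y_i=y_i-\sum_{k\le m}g_{ik}y_k$ for $i>m$ is a unipotent, hence invertible, change of the odd generators over $R$, and since $dg_{ik}=0$ one computes $d\bar y_i=a_i-\sum_k g_{ik}a_k=0$. Therefore $A$ is isomorphic, as a commutative DGA, to
\[
A\;\cong\;C\otimes\bigl(\Lambda(\bar y_{m+1},\dots,\bar y_l),0\bigr),\qquad C=\bigl(R\otimes\Lambda(y_1,\dots,y_m),\,d_C\bigr),\ d_C y_k=a_k.
\]
The factor $C$ is exactly the Koszul complex of the sequence $(a_1,\dots,a_m)$ in $R$, graded so that its word-length-$p$ homology is the Koszul homology $H_p$, with $H_0=R/(a_1,\dots,a_m)$.

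The decisive step is to show $(a_1,\dots,a_m)$ is a regular sequence, and here I would bring in compactness. Since $G/H$ is a closed manifold, $H^*(G/H)=H^*(A)=H^*(C)\otimes\Lambda(\bar y_{m+1},\dots,\bar y_l)$ is finite-dimensional, forcing $H^*(C)$, and in particular $H_0=R/(a_1,\dots,a_m)$, to be finite-dimensional. Because $R$ is a polynomial ring of Krull dimension $m=\operatorname{rank}H$ and the $a_k$ are homogeneous of positive degree, finiteness of $R/(a_1,\dots,a_m)$ means the $a_k$ form a homogeneous system of parameters; as $R$ is Cohen–Macaulay, a homogeneous system of parameters is automatically a regular sequence. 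Consequently the Koszul complex is acyclic in positive word-length, so $H^*(C)=R/(a_1,\dots,a_m)=\mathbb{R}[\mathfrak t_H]^{W(H)}/(j^*(f_1),\dots,j^*(f_m))$, concentrated in word-length $0$. Combined with the Künneth splitting above, this yields the asserted formula, the exterior factor being generated by the odd transgressive classes written $x_{m+1},\dots,x_l$ in the statement. I expect this regular-sequence reduction to be the main obstacle, since it is where the geometric input (compactness) must be converted into the algebraic system-of-parameters/Cohen–Macaulay dichotomy.

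Finally, formality comes for free from the same description. Because $(a_1,\dots,a_m)$ is a regular sequence, the augmentation $C\to\bigl(R/(a_1,\dots,a_m),0\bigr)$ sending each $y_k\mapsto 0$ is a morphism of commutative DGAs inducing an isomorphism on cohomology, that is, a quasi-isomorphism onto $(H^*(C),0)$; thus $C$ is formal. The second tensor factor has zero differential and so coincides with its own cohomology, and a tensor product of formal commutative DGAs is formal, whence $A$ is formal. Since $A$ is quasi-isomorphic to $A_{PL}(G/H)$ and formality is an invariant of the quasi-isomorphism type, $A_{PL}(G/H)$ is quasi-isomorphic to $(H^*(G/H),0)$, which is precisely the formality of $G/H$.
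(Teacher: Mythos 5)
Your proof is correct, and it is essentially the standard argument underlying the cited result: the paper itself states this theorem without proof (quoting Terzi\v c \cite{T}, Theorem 8), but it records exactly your model as the Cartan algebra $(C_{G/H},d)$ with $dy_j=j^*(f_j)$, and the classical proof proceeds by the same two steps you use --- the shear $\bar y_i=y_i-\sum_{k\le m}g_{ik}y_k$ splitting off a zero-differential exterior factor, followed by the Koszul regular-sequence computation identifying $H^*$ of the remaining factor with $\mathbb{R}[\mathfrak{t}_H]^{W(H)}/(j^*(f_1),\dots,j^*(f_m))$ and giving formality via the augmentation quasi-isomorphism. The only variation worth noting is that you obtain finite-dimensionality of this quotient from compactness of $G/H$, whereas one can also argue purely algebraically (the positive-degree invariants $j^*(f_1),\dots,j^*(f_l)$ have $\{0\}$ as their common zero locus in $\mathfrak{t}_H^{\mathbb{C}}$, so the quotient is finite-dimensional by the Nullstellensatz, and the hypothesis reduces the ideal to $(j^*(f_1),\dots,j^*(f_m))$); both routes are valid, your Cohen--Macaulay upgrade from homogeneous system of parameters to regular sequence is precisely the needed step, and the only implicit hypothesis you rely on --- that $H$ is connected, so that $H^*(BH)\cong\mathbb{R}[\mathfrak{t}_H]^{W(H)}$ --- is the paper's standing assumption.
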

As usual, the differential graded algebra
$$(C_{G/H},d)=(H^*(BH)\otimes \Lambda (y_1,...,y_l),d)$$
$$d|_{H^*(BH)}=0,\,dy_j=j^*(f_j),\,j=1,...,l$$
is called the {\it Cartan algebra} of $G/H$. Note that $H^*(G/H)\cong H^*(C_{G/H})$ for any homogeneous space $G/H$ of a compact Lie group $G$.
Consider the exterior algebras
$$H^*(G)\cong\Lambda\,P_G\cong\Lambda (y_1,...,y_l),$$$$H^*(H)\cong\Lambda\,P_H\cong\Lambda (x_1,...,x_m)$$

\begin{theorem}[\cite{O}, p.211]\label{thm:poincare} If $G/H$ is formal, then
$$P(G/H,t)=\prod_{j=m+1}^l(1+t^{2p_j-1})\prod_{i=1}^m(1-t^{2p_i})/(1-t^{2q_i}).$$
\end{theorem}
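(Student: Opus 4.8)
The plan is to push the formality hypothesis through the Cartan-algebra description of $H^*(G/H)$ and then read the Poincar\'e polynomial off the resulting tensor product factor by factor. By the structure theory of compact homogeneous spaces of compact Lie groups (\cite{O}; cf.\ Theorem \ref{thm:g/h}), formality of $G/H$ is equivalent to the condition appearing in Theorem \ref{thm:g/h}: the generators $f_1,\dots,f_l$ of $H^*(BG)=\mathbb{R}[\mathfrak{t}]^{W(G)}$ may be chosen so that $j^*(f_1),\dots,j^*(f_m)$ generate an ideal containing $j^*(f_{m+1}),\dots,j^*(f_l)$. In that case Theorem \ref{thm:g/h} yields the isomorphism of graded algebras
$$H^*(G/H)\cong Q\otimes\Lambda(y_{m+1},\dots,y_l),\qquad Q:=\mathbb{R}[\mathfrak{t}_H]^{W(H)}/(j^*(f_1),\dots,j^*(f_m)),$$
where $\deg y_j=2p_j-1$. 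Since the Poincar\'e polynomial is multiplicative over tensor products of graded vector spaces, it is enough to evaluate the two factors separately.

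The exterior factor is immediate: an exterior algebra on odd generators of degrees $2p_{m+1}-1,\dots,2p_l-1$ has Poincar\'e polynomial $\prod_{j=m+1}^{l}(1+t^{2p_j-1})$, which is the first product in the asserted identity.

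For the factor $Q$ the crucial observation is that $j^*(f_1),\dots,j^*(f_m)$ is in fact a regular sequence of length $m$. I would deduce this from finiteness and commutative algebra: $G/H$ is a compact manifold, so $H^*(G/H)$ is finite dimensional, and the tensor decomposition then forces $Q$ to be finite dimensional too. Thus the $m$ homogeneous elements $j^*(f_1),\dots,j^*(f_m)$ reduce the Krull dimension of $\mathbb{R}[\mathfrak{t}_H]^{W(H)}=\mathbb{R}[u_1,\dots,u_m]$ from $m$ to $0$, i.e.\ they form a homogeneous system of parameters; since a polynomial ring is Cohen--Macaulay and in such a ring every homogeneous system of parameters is a regular sequence, the claim follows. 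Now the standard Poincar\'e-series computation applies: quotienting a graded ring by a homogeneous non-zero-divisor $g$ of degree $d$ sits in a short exact sequence $0\to R(-d)\to R\to R/(g)\to 0$ (the first map being multiplication by $g$) and hence multiplies the Poincar\'e series by $(1-t^{d})$. Iterating this over the regular sequence $j^*(f_1),\dots,j^*(f_m)$, whose degrees are $\deg j^*(f_i)=\deg f_i=2p_i$, and starting from $P_{H^*(BH)}(t)=\prod_{i=1}^{m}(1-t^{2q_i})^{-1}$, gives
$$P_Q(t)=\Bigl(\prod_{i=1}^{m}\frac{1}{1-t^{2q_i}}\Bigr)\prod_{i=1}^{m}(1-t^{2p_i})=\prod_{i=1}^{m}\frac{1-t^{2p_i}}{1-t^{2q_i}}.$$
Multiplying $P_Q(t)$ by the exterior contribution gives exactly the stated formula.

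The step I expect to be the main obstacle is the one glossed at the start: that formality is not merely implied by the regular-sequence/ideal condition of Theorem \ref{thm:g/h} but is equivalent to it, so that the hypothesis of Theorem \ref{thm:poincare} genuinely delivers the tensor decomposition. This equivalence is the substantive input from Onishchik's theory \cite{O}; everything afterwards is commutative-algebra bookkeeping. A final consistency check I would run is on the indexing, namely that the $m$ invariants entering the regular sequence (degrees $2p_1,\dots,2p_m$) are complementary to the $l-m$ surviving odd generators $y_{m+1},\dots,y_l$ (degrees $2p_{m+1}-1,\dots,2p_l-1$), so that the two products in the formula range over disjoint index sets precisely as written.
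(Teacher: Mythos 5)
Your proposal cannot be checked against the paper's own argument, because the paper gives none: Theorem \ref{thm:poincare} is imported from \cite{O}, p.~211, and used as a black box. Judged on its own merits, your reconstruction is correct and is essentially the standard derivation of this formula. The forward half of your opening equivalence (the ideal condition implies formality together with the tensor decomposition) is exactly Theorem \ref{thm:g/h}; the reverse half (formality implies the ideal/Cartan-pair condition, for a suitable choice and ordering of the generators $f_j$) is the one substantive input, and you correctly identify it as Onishchik's theory of Cartan pairs --- which is precisely the content of the source the paper cites, so delegating that step is legitimate here. The commutative algebra is sound: finite-dimensionality of $H^*(G/H)$ forces $Q$ to be finite-dimensional, hence $j^*(f_1),\dots,j^*(f_m)$ is a homogeneous system of parameters in $\mathbb{R}[u_1,\dots,u_m]$ and therefore a regular sequence by Cohen--Macaulayness (this also forces each $j^*(f_i)$, $i\le m$, to be nonzero, so the degree count $\deg j^*(f_i)=2p_i$ is legitimate), and the Koszul short exact sequences then give $P_Q(t)=\prod_{i=1}^m(1-t^{2p_i})/(1-t^{2q_i})$. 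Your closing caveat about indexing is also the right one: which $m$ of the exponents $p_1,\dots,p_l$ enter the quotient factor depends on the embedding of $H$ in $G$; this is exactly the point of the paper's remark following the theorem, and the reason its algorithms run over permutations of the set $\{p_1,\dots,p_l\}$.
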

Note that even if the gradings of $P_G$ and $P_H$ are known, in general it is not possible to write down the Poincar\'e polynomial of $G/H$ without the knowledge of the embedding of $H$ into $G$, because it influences the order of the generators. As usual, we call $p_i$ and $q_j$ the exponents of $G$ and $H$, respectively.

\section{Tholozan's obstruction to compact Clifford-Klein forms in an algorithmic fashion }
\begin{theorem}\label{thm:d} let $G/H$ be a reductive homogeneous space of a connected non-compact semisimple real linear Lie group $G$. Assume that $d=\dim\,H_U/K_H=d(H)$. If the coefficient in degree $d$ of the  Poincar\'e polynomial $P(G_U/K,t)$ vanishes, then $G/H$ does not admit compact Clifford-Klein forms.
\end{theorem}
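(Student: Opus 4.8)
The plan is to convert the vanishing of a single Betti number of $G_U/K$ into the vanishing of Tholozan's invariant form $\omega_{G,H}$, and then derive a contradiction via Theorem \ref{thm:omega}. First I would settle the degree bookkeeping. Since $p=\dim G/H-\dim K/K_H$, a direct computation with the compatible Cartan decompositions gives
$$p=(\dim\mathfrak{g}-\dim\mathfrak{h})-(\dim\mathfrak{k}-\dim\mathfrak{k}_H)=\dim\mathfrak{p}-\dim\mathfrak{p}_H=d(G)-d(H).$$
Writing $n:=\dim(G_U/K)=d(G)$ and using $d=\dim(H_U/K_H)=d(H)$ (the standard fact $\dim H_U/K_H=\dim\mathfrak{p}_H=d(H)$), this says that $\omega_{G,H}$, and hence its dual $G_U$-invariant form $\omega^U_{G,H}$ on $G_U/K$, sits in degree $p=n-d$.

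Next I would show that the hypothesis forces $[\omega^U_{G,H}]=0$. The compact manifold $H_U/K_H$ has dimension $d$, so its fundamental class lies in $H_d(H_U/K_H)$ and $i_*[H_U/K_H]\in H_d(G_U/K)$. The coefficient of $t^d$ in $P(G_U/K,t)$ is exactly $\dim H_d(G_U/K)$; by assumption it vanishes, so $H_d(G_U/K)=0$ and therefore $i_*[H_U/K_H]=0$. By Theorem \ref{thm:vanishing-omega} the class $\tfrac{1}{\mathrm{vol}(G_U/H_U)}[\omega^U_{G,H}]$ is Poincaré dual to this homology class; being dual to $0$, it is itself $0$, whence $[\omega^U_{G,H}]=0$ in $H^p(G_U/K)$. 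Equivalently, Poincaré duality on the closed manifold $G_U/K$ (orientable because the isotropy group $K$ is connected) gives $b_p=b_{n-p}=b_d=0$, so the whole group $H^p(G_U/K)$ already vanishes.

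The crucial step is to upgrade the vanishing of the cohomology class to the vanishing of the form itself. Here I would use that $G_U/K$ is a compact Riemannian symmetric space: equipped with the $G_U$-invariant metric, every $G_U$-invariant form is harmonic, and the map sending an invariant form to its de Rham class is injective -- this is the classical averaging/Hodge argument underlying the formality in Theorem \ref{thm:symm-formal}. Since $\omega^U_{G,H}$ is $G_U$-invariant and $[\omega^U_{G,H}]=0$, it follows that $\omega^U_{G,H}=0$. Transporting back through the $K$-equivariant isomorphism between invariant forms on $G_U/K$ and on $G/K$ yields $\omega_{G,H}=0$ identically. Finally, for any torsion-free discrete $\Gamma$ acting properly discontinuously and cocompactly on $G/H$, Theorem \ref{thm:omega} gives $\mathrm{vol}(\Gamma\backslash G/H)=|\int_{[\Gamma]}\omega_{G,H}|=0$, which is impossible for a compact quotient; hence no compact Clifford-Klein form exists.

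The main obstacle is precisely the passage from $[\omega^U_{G,H}]=0$ to $\omega^U_{G,H}=0$; everything else is dimension bookkeeping together with a direct appeal to Theorems \ref{thm:omega} and \ref{thm:vanishing-omega}. This step genuinely requires the symmetric-space structure of $G_U/K$, so that invariance forces harmonicity and harmonic forms are determined by their classes. Without invariance one could only conclude that the form is exact, not zero, and on the \emph{contractible} space $G/K$ such a statement carries no information at all -- the invariance transported from the compact dual is what makes the argument bite. A secondary point to verify is the orientability of $H_U/K_H$ and $G_U/K$, which is needed for the fundamental class and the Poincaré-duality reformulation to behave as stated, and which follows from connectedness of the isotropy subgroups.
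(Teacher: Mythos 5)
Your proposal is correct and takes essentially the same route as the paper: the paper compresses your whole chain (Betti number vanishes $\Rightarrow$ $i_*[H_U/K_H]=0$ $\Rightarrow$ $[\omega^U_{G,H}]=0$ $\Rightarrow$ the invariant, hence harmonic, form $\omega^U_{G,H}$ vanishes $\Rightarrow$ the volume formula of Theorem \ref{thm:omega} rules out compact forms) into the single assertion that a compact Clifford--Klein form would force $i_*[H_U/K_H]\neq 0$ ``by Theorem \ref{thm:omega} and Theorem \ref{thm:vanishing-omega} applied together,'' and then derives the same contradiction with $H_d(G_U/K)=0$. Your explicit harmonicity/injectivity step on the compact symmetric space $G_U/K$ is exactly the content the paper leaves implicit, so the two arguments coincide.
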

\begin{proof} Recall that if $G/H$ admitted a compact Cliffod-Klein form, one would have $i_*[H_U/K_H]\not=0$, by Theorem \ref{thm:omega} and Theorem \ref{thm:vanishing-omega} applied together. Since $d=\dim (H_U/K_H)$, $H_d(G_U/K)$ must be non-zero, as well as $H^d(G_U/K)$. On the other hand, by the assumption of the theorem, $H^d(G_U/K)=0$, a  contradiction.
\end{proof}
\begin{corollary}\label{cor:algor-base}  Let
$$\Lambda\,P_{G_U}=\Lambda (y_1,...,y_l),\,\Lambda\, P_K=\Lambda (x_1,...,x_m)$$
be the exterior algebras over the spaces $P_G$ and $P_H$ of the primitive elements in $H^*(G)$ and $H^*(K)$ respectively. Denote the degrees as follows
$$|y_j|=2p_j-1,j=1,...,l,$$$$|x_i|=2q_i-1,\,i=1,...,m.$$
 Consider the polynomial
$$P(t)=\prod_{j=m+1}^l(1+t^{2p_j-1})\prod_{i=1}^m(1-t^{2p_i})/(1-t^{2q_i}). \eqno (1)$$
If the coefficient in degree $d=d(H)$ of $P(t)$ vanishes, then $G/H$ does not admit a compact Clifford-Klein form.
\end{corollary}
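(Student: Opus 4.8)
The plan is to recognize the polynomial $P(t)$ in $(1)$ as the Poincaré polynomial of the compact homogeneous space $G_U/K$ and then apply Theorem \ref{thm:d} directly. The starting observation is that, by the construction of the compact dual recalled in Section \ref{sec:prelim}, the Riemannian symmetric space $G/K$ of non-compact type has compact dual precisely $G_U/K$. Hence $G_U/K$ is itself a compact Riemannian symmetric space, and Theorem \ref{thm:symm-formal} guarantees that it is formal.

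Granted formality, I would apply Theorem \ref{thm:poincare} to the compact pair $(G_U,K)$ in place of $(G,H)$. Here $l=\operatorname{rank}G_U$ is the number of primitive generators $y_1,\dots,y_l$ of $H^*(G_U)$, with degrees $|y_j|=2p_j-1$, while $m=\operatorname{rank}K$ is the number of primitive generators $x_1,\dots,x_m$ of $H^*(K)$, with degrees $|x_i|=2q_i-1$. The numbers $p_j$ and $q_i$ are thus the exponents of $G_U$ and of $K$, and substituting them into the formula of Theorem \ref{thm:poincare} reproduces exactly the product displayed in $(1)$. Therefore $P(t)=P(G_U/K,t)$.

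Finally I would combine this identification with Theorem \ref{thm:d}. Since $d=d(H)=\dim H_U/K_H$, the vanishing of the coefficient of $t^d$ in $P(t)$ is the same as $H^d(G_U/K)=0$, which is precisely the hypothesis of Theorem \ref{thm:d}. That theorem then yields that $G/H$ admits no compact Clifford-Klein form, which is the assertion of the corollary.

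The main obstacle I foresee is justifying that Theorem \ref{thm:poincare} applies verbatim to $(G_U,K)$: its derivation through Theorem \ref{thm:g/h} rests on the restricted invariants $j^*(f_{m+1}),\dots,j^*(f_l)$ lying in the ideal generated by $j^*(f_1),\dots,j^*(f_m)$, equivalently on $G_U/K$ being formal. This is exactly where Theorem \ref{thm:symm-formal} is essential, since formality of the compact symmetric space $G_U/K$ both supplies the hypothesis of Theorem \ref{thm:poincare} and ensures that the Cartan-algebra computation collapses to the closed product $(1)$. Once this point is secured, the equality $P(t)=P(G_U/K,t)$ and the conclusion via Theorem \ref{thm:d} are immediate.
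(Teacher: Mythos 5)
Your proposal is correct and follows exactly the paper's own proof: formality of the compact symmetric space $G_U/K$ via Theorem \ref{thm:symm-formal}, identification of $P(t)$ with the Poincar\'e polynomial $P(G_U/K,t)$ via Theorem \ref{thm:poincare}, and the conclusion via Theorem \ref{thm:d}. Your additional remarks on why Theorem \ref{thm:poincare} applies to the pair $(G_U,K)$ only spell out details the paper leaves implicit.
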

\begin{proof} Since $G_U/K$ is a symmetric space, it is formal by Theorem \ref{thm:symm-formal}. By Theorem \ref{thm:poincare} the Poincar\'e polynomial of $G/K$ has the form $(1)$. The proof follows from Theorem \ref{thm:d}.
\end{proof}

 \section{Algorithms detecting homogeneous spaces with no Clifford-Klein forms}
 Our algorithms are based on Corollary \ref{cor:algor-base}. Given a homogeneous space $G/H$ we need to do the following:
 \begin{itemize}
 \item eliminate homogeneous spaces $G/H$ which do not have compact Clifford-Klein forms by the Calabi-Markus phenomenon (\ref{thm:c-m});
 \item write down the exponents $p_1,...,p_l$ of $G$ and $q_1,...,q_m$ of $K$;
 \item calculate $d=d(H)$;
 \item write down the polynomial $(1)$ from Corollary \ref{cor:algor-base};
 \item write down all possible polynomials which are obtained from $(1)$ by permutation of the set $P=\{p_1,...,p_l\}$ and delete all obtained expressions which are rational functions but not polynomials (these do not correspond to any formal homogeneous space);
 \item if at least one of obtained polynomials has non-zero coefficient in degree $d$, exclude such pair $(G,H)$ from the consideration;
 \item if all permutations yield polynomials with zero coefficient in degree $d$, write down the output $G/H$ as a homogeneous space with no compact Clifford-Klein forms.
 \end{itemize}
 \begin{remark} {\rm Note that our algorithm may not detect some homogeneous spaces $G/H$ which do not have compact Clifford-Klein forms by Tholozan's theorem. This may happen, because of the permutation of $P=\{p_1,...,p_l\}$. We need to do this operation, because we don't know the embedding  $\mathfrak{h}\hookrightarrow\mathfrak{g}$, and, therefore, the order of $p_i$ in the expression $(1)$. However, this cannot happen if $\operatorname{rank}\,G=\operatorname{rank}\,K$. Basically, this condition is satisfied for almost all cases. Here is the list of exceptions ($\operatorname{rank}\,G>\operatorname{rank}\,K$):}
 $$G=SL(n,\mathbb{R}),\,G=SU^*(n),\,G=SO(2k+1,2m+1),$$$$G=E_{6(6)},\,G=E_{6(-26)}.$$
 \end{remark}

  In order to get classes of homogeneous spaces $G/H$ without compact Clifford-Klein forms, we use the method of computer generation of (all)  semisimple subalgebras in exceptional Lie algebras developed in \cite{DFG}.
  
\subsection{Algorithm  1}

\begin{algorithm}[H]
  \caption{\tt RegularNoCK($\mathfrak{g}$)}
  \footnotesize
  \label{alg1}
  \tcc{
  $\mathfrak{g}$ - noncompact real simple Lie algebra. 
   Return list $L$ of regular subalgebras $\mathfrak{h}$ of $\mathfrak{g}$ that corresponding homogeneous space $G/H$ do not have compact Clifford-Klein form.}
  \Begin{
  let $L0$ be a set of all regular subalgebras of $\mathfrak{g}$ (use Algoritm from \S 8, \cite{DFG}) \;
  set $L:=\emptyset$, $L1:=\emptyset$, $L2:=\emptyset$, $D:=\emptyset$, $D2:=\emptyset$\;
  \ForAll{$\mathfrak{h} \in L0$}{
    \tcp{Calabi-Markus phenomenon}
    \lIf{$\mathrm{rank}_{\mathbb{R}} \mathfrak{g}>\mathrm{rank}_{\mathbb{R}} \mathfrak{h}$}{add $\mathfrak{h}$ to $L1$} 
    }
  \ForAll{$\mathfrak{h} \in L1$}{
  compute $d=d(H)$ for corresponding $H$ for $\mathfrak{h}$\;
  add tuple $[ \mathfrak{h}, d ] $ to $L2$ \;
  \lIf{$d \notin D$}{add $d$ to $D$} 
    }
  let  $\{p_1, \ldots, p_l\}$ - the exponents for $G_U$ and $\{q_1,\ldots, q_m\}$ exponents for $K$ (as in Corollary \ref{cor:algor-base})\;
  \ForAll{$d \in D$}{
    set $\mathtt{isOk}:=\mathtt{true}$\;
    \ForAll{$\sigma \in S_l$ ($S_l$ - set of permutations of set $\{1,\ldots, l\}$)}{
      compute $P(t)$ as in Corollary \ref{cor:algor-base}  for $\{p_{\sigma(1)}, \ldots, p_{\sigma(l)}\}$ and $\{q_1,\ldots, q_m\}$\;
          \If{$P(t)$ is polynomial}{
            \If{coefficient in degree $d$ of $P(t)$ does not vanish}{$\mathtt{isOk}:=\mathtt{false}$; break;}
          }
      }
      \lIf{$\mathtt{isOk}$}{add $d$ to $D2$}
    }
\ForAll{$[\mathfrak{h},d] \in L2$}{
\lIf{$d \in D2$}{ add $\mathfrak{h}$ to $L$}
}
  \Return{$L$}\;
  
  }
  \end{algorithm}

\subsection{Algorithm 2 - split version}

\begin{algorithm}[H]
  \caption{\tt SplitNoCK($\mathfrak{g}$)}
  \footnotesize
  \label{alg2}
  \tcc{
  $\mathfrak{g}$ - noncompact real split exceptional simple Lie algebra. 
   Return list $L$ of split regular subalgebras $\mathfrak{h}$ of $\mathfrak{g}$ that corresponding homogeneous space $G/H$ do not have compact Clifford-Klein form.}
  \Begin{
  let $L0$ be a set of all regular subalgebras of $\mathfrak{g}^{\mathbb{C}}$ (use Table 11 in \cite{D} or SLA plugin for GAP-software, \cite{G}) \;
  set $L:=\emptyset$, $L1:=\emptyset$, $L2:=\emptyset$, $D:=\emptyset$, $D2:=\emptyset$\;
  \ForAll{$\mathfrak{h}^{\mathbb{C}} \in L0$}{
    set $\mathfrak{h}$ as split real form of $\mathfrak{h}^{\mathbb{C}}$\;
    \tcp{Calabi-Markus phenomenon}
    \lIf{$\mathrm{rank}_{\mathbb{R}} \mathfrak{g}>\mathrm{rank}_{\mathbb{R}} \mathfrak{h}$}{add $\mathfrak{h}$ to $L1$} 
    }
  \ForAll{$\mathfrak{h} \in L1$}{
  compute $d=d(H)$ for corresponding $H$ for $\mathfrak{h}$\;
  add tuple $[ \mathfrak{h}, d ] $ to $L2$ \;
  \lIf{$d \notin D$}{add $d$ to $D$} 
    }
  let  $\{p_1, \ldots, p_l\}$ - the exponents for $G_U$ and $\{q_1,\ldots, q_m\}$ exponents for $K$ (as in Corollary \ref{cor:algor-base})\;
  \ForAll{$d \in D$}{
    set $\mathtt{isOk:=true}$\;
    \ForAll{$\sigma \in S_l$ ($S_l$ - set of permutations of set $\{1,\ldots, l\}$)}{
      compute $P(t)$ as in Corollary \ref{cor:algor-base}  for $\{p_{\sigma(1)}, \ldots, p_{\sigma(l)}\}$ and $\{q_1,\ldots, q_m\}$\;
          \If{$P(t)$ is polynomial}{
            \If{coefficient in degree $d$ of $P(t)$ does not vanish}{$\mathtt{isOk:=false}$; break;}
          }
      }
      \lIf{$\mathtt{isOk}$}{add $d$ to $D2$}
    }
\ForAll{$[\mathfrak{h},d] \in L2$}{
\lIf{$d \in D2$}{ add $\mathfrak{h}$ to $L$}
}
  \Return{$L$}\;
  
  }
\end{algorithm}

  \section{Proof of Theorem \ref{thm:main}}
  Using  our algorithms and the computer program we create tables in Section \ref{sec:tables}, and Corollary \ref{cor:algor-base} guarantees that all such homogeneous spaces do not admit Clifford-Klein forms. In greater detail, we obtain two tables according to the following procedures. 
  \begin{itemize}
  \item Table 1: we take the list of all semisimple regular subalgebras $\mathfrak{h}$ in $\mathfrak{g}$ of type $\mathfrak{e}_{6(6)}$ obtained in \cite{DFG}, and apply Algorithm 1 from subsection 6.1. 
  \item Table 2: here we restrict ourselves to only split pairs $(\mathfrak{g},\mathfrak{h})$. In this case, $\mathfrak{h}\subset\mathfrak{g}$ if and only if $\mathfrak{h}^c\subset\mathfrak{g}^c$. It follows that we can use either (corrected) Dynkin's classification of semisimple subalgebras of complex simple Lie algebras \cite{D}, or \cite{G1}. We apply Algorithm 2 from subsection 6.2 and get the resulting table.
  \end{itemize}
   
  \section{Tables}\label{sec:tables}
  Here we illustrate our algorithms by lists of new homogeneous spaces of exceptional Lie groups which do not have compact Clifford-Klein forms. 
  
\begin{longtable}{|c|c|}
\kill
\caption{List of pairs $(\mathfrak{g}=\mathfrak{e}_{6(6)},\mathfrak{h}$) that corresponding homogeneous spaces $G/H$ do not have compact Clifford-Klein forms (Algorithm \ref{alg1}).}\label{tab:reg}\\
\hline $\mathfrak{g}$ & $\mathfrak{h}$ \\\hline
 $\mathfrak{e}_{6(6)}$ & $   \mathfrak{sl}(2,\mathbb{R}) $ \\\hline 
 $\mathfrak{e}_{6(6)}$ & $   \mathfrak{sl}(3,\mathbb{R}) $ \\\hline 
 $\mathfrak{e}_{6(6)}$ & $   \mathfrak{sl}(5,\mathbb{R}) $ \\\hline 
 $\mathfrak{e}_{6(6)}$ & $   \mathfrak{sl}(6,\mathbb{R}) $ \\\hline 
 $\mathfrak{e}_{6(6)}$ & $   \mathfrak{sl}(2,\mathbb{R}) \oplus\mathfrak{sl}(2,\mathbb{R}) $ \\\hline 
 $\mathfrak{e}_{6(6)}$ & $   \mathfrak{sl}(3,\mathbb{R}) \oplus\mathfrak{sl}(2,\mathbb{R}) $ \\\hline 
 $\mathfrak{e}_{6(6)}$ & $   \mathfrak{sl}(3,\mathbb{R}) \oplus\mathfrak{sl}(3,\mathbb{R}) $ \\\hline 
 $\mathfrak{e}_{6(6)}$ & $   \mathfrak{sl}(4,\mathbb{R}) \oplus\mathfrak{sl}(2,\mathbb{R}) $ \\\hline 
 $\mathfrak{e}_{6(6)}$ & $   \mathfrak{sl}(2,\mathbb{R}) \oplus\mathfrak{sl}(2,\mathbb{R}) \oplus\mathfrak{sl}(2,\mathbb{R}) $ \\\hline 
 $\mathfrak{e}_{6(6)}$ & $   \mathfrak{sl}(3,\mathbb{R}) \oplus\mathfrak{sl}(3,\mathbb{R}) \oplus\mathfrak{sl}(2,\mathbb{R}) $ \\\hline 
 $\mathfrak{e}_{6(6)}$ & $   \mathfrak{sl}(4,\mathbb{R}) \oplus\mathfrak{sl}(2,\mathbb{R}) \oplus\mathfrak{sl}(2,\mathbb{R}) $ \\\hline 
  $\mathfrak{e}_{6(6)}$ & $   \mathfrak{sl}(2,\mathbb{C})  $ \\\hline 
   $\mathfrak{e}_{6(6)}$ & $   \mathfrak{sl}(2,\mathbb{R})\oplus   \mathfrak{sl}(2,\mathbb{C}) $ \\\hline 
 
      $\mathfrak{e}_{6(6)}$ & $   \mathfrak{so}(3,5)  $ \\\hline 
           $\mathfrak{e}_{6(6)}$ & $\mathfrak{sl}(2,\mathbb{C})\oplus   \mathfrak{su}(2,2)  $ \\\hline 
              $\mathfrak{e}_{6(6)}$ & $   \mathfrak{sl}(2,\mathbb{R})\oplus \mathfrak{sl}(2,\mathbb{R})\oplus   \mathfrak{sl}(2,\mathbb{C}) $ \\\hline 
$\mathfrak{e}_{6(6)}$ & $   \mathfrak{sl}(2,\mathbb{C})\oplus \mathfrak{sl}(2,\mathbb{C}) $ \\\hline
$\mathfrak{e}_{6(6)}$ & $   \mathfrak{sl}(2,\mathbb{C})\oplus \mathfrak{su}(1,2) $ \\\hline
$\mathfrak{e}_{6(6)}$ & $   \mathfrak{sl}(2,\mathbb{R})\oplus \mathfrak{sl}(3,\mathbb{C}) $ \\\hline 
$\mathfrak{e}_{6(6)}$ & $   \mathfrak{su}(1,2) $ \\\hline 
$\mathfrak{e}_{6(6)}$ & $ \mathfrak{sl}(3,\mathbb{C}) \oplus \mathfrak{su}(1,2) $ \\\hline 
$\mathfrak{e}_{6(6)}$ & $ \mathfrak{su}(2)\oplus\mathfrak{su}^{\star}(4) $ \\\hline 
$\mathfrak{e}_{6(6)}$ & $ \mathfrak{su}(2) \oplus\mathfrak{sl}(2,\mathbb{C})  $ \\\hline
$\mathfrak{e}_{6(6)}$ & $ \mathfrak{su}^{\star}(4) $ \\\hline 
$\mathfrak{e}_{6(6)}$ & $ \mathfrak{su}^{\star}(6) $ \\\hline 
$\mathfrak{e}_{6(6)}$ & $ \mathfrak{su}(2)\oplus\mathfrak{su}^{\star}(6) $ \\\hline  
$\mathfrak{e}_{6(6)}$ & $ \mathfrak{su}(2)\oplus\mathfrak{su}(2)\oplus\mathfrak{su}^{\star}(4) $ \\\hline 
$\mathfrak{e}_{6(6)}$ & $ \mathfrak{su}(2)\oplus\mathfrak{su}(2)\oplus\mathfrak{sl}(2,\mathbb{C}) $ \\\hline
\end{longtable}

\begin{longtable}{|c|c|}
\kill
\caption{List of pairs $(\mathfrak{g},\mathfrak{h})$ (both split) that corresponding homogeneous spaces $G/H$ do not have compact Clifford-Klein forms (Algorithm \ref{alg2}).}\label{tab:split}\\
\hline $\mathfrak{g}$ & $\mathfrak{h}$ \\\hline
$\mathfrak{e}_{6(6)}$ &  $ \mathfrak{sl}(2,\mathbb{R}) $ \\\hline $\mathfrak{e}_{6(6)}$ & $ 
  \mathfrak{sl}(3,\mathbb{R}) $ \\\hline $\mathfrak{e}_{6(6)}$ & $ 
  \mathfrak{sl}(5,\mathbb{R}) $ \\\hline $\mathfrak{e}_{6(6)}$ & $ 
  \mathfrak{sl}(6,\mathbb{R}) $ \\\hline $\mathfrak{e}_{6(6)}$ & $ 
  \mathfrak{sl}(2,\mathbb{R}) \oplus\mathfrak{sl}(2,\mathbb{R}) $ \\\hline $\mathfrak{e}_{6(6)}$ & $ 
  \mathfrak{sl}(3,\mathbb{R}) \oplus\mathfrak{sl}(2,\mathbb{R}) $ \\\hline $\mathfrak{e}_{6(6)}$ & $ 
  \mathfrak{sl}(3,\mathbb{R}) \oplus\mathfrak{sl}(3,\mathbb{R}) $ \\\hline $\mathfrak{e}_{6(6)}$ & $ 
  \mathfrak{sl}(4,\mathbb{R}) \oplus\mathfrak{sl}(2,\mathbb{R}) $ \\\hline $\mathfrak{e}_{6(6)}$ & $ 
  \mathfrak{sl}(2,\mathbb{R}) \oplus\mathfrak{sl}(2,\mathbb{R}) \oplus\mathfrak{sl}(2,\mathbb{R}) $ \\\hline $\mathfrak{e}_{6(6)}$ & $ 
  \mathfrak{sl}(3,\mathbb{R}) \oplus\mathfrak{sl}(3,\mathbb{R}) \oplus\mathfrak{sl}(2,\mathbb{R}) $ \\\hline $\mathfrak{e}_{6(6)}$ & $ 
  \mathfrak{sl}(4,\mathbb{R}) \oplus\mathfrak{sl}(2,\mathbb{R}) \oplus\mathfrak{sl}(2,\mathbb{R}) $ \\\hline
$\mathfrak{e}_{7(7)}$ &    $ \mathfrak{sl}(2,\mathbb{R}) $ \\\hline $\mathfrak{e}_{7(7)}$ & $ 
  \mathfrak{sl}(3,\mathbb{R}) $ \\\hline $\mathfrak{e}_{7(7)}$ & $ 
  \mathfrak{sl}(4,\mathbb{R}) $ \\\hline $\mathfrak{e}_{7(7)}$ & $ 
  \mathfrak{sl}(7,\mathbb{R}) $ \\\hline $\mathfrak{e}_{7(7)}$ & $ 
  \mathfrak{so}(5,5) $ \\\hline $\mathfrak{e}_{7(7)}$ & $ 
  \mathfrak{sl}(2,\mathbb{R}) \oplus\mathfrak{sl}(2,\mathbb{R}) $ \\\hline $\mathfrak{e}_{7(7)}$ & $ 
  \mathfrak{sl}(3,\mathbb{R}) \oplus\mathfrak{sl}(2,\mathbb{R}) $ \\\hline $\mathfrak{e}_{7(7)}$ & $ 
  \mathfrak{sl}(4,\mathbb{R}) \oplus\mathfrak{sl}(2,\mathbb{R}) $ \\\hline $\mathfrak{e}_{7(7)}$ & $ 
  \mathfrak{sl}(5,\mathbb{R}) \oplus\mathfrak{sl}(3,\mathbb{R}) $ \\\hline $\mathfrak{e}_{7(7)}$ & $ 
  \mathfrak{so}(5,5) \oplus\mathfrak{sl}(2,\mathbb{R}) $ \\\hline $\mathfrak{e}_{7(7)}$ & $ 
  \mathfrak{sl}(3,\mathbb{R}) \oplus\mathfrak{sl}(2,\mathbb{R}) \oplus\mathfrak{sl}(2,\mathbb{R}) $ \\\hline $\mathfrak{e}_{7(7)}$ & $ 
  \mathfrak{sl}(3,\mathbb{R}) \oplus\mathfrak{sl}(3,\mathbb{R}) \oplus\mathfrak{sl}(3,\mathbb{R}) $ \\\hline $\mathfrak{e}_{7(7)}$ & $ 
  \mathfrak{sl}(4,\mathbb{R}) \oplus\mathfrak{sl}(2,\mathbb{R}) \oplus\mathfrak{sl}(2,\mathbb{R}) $ \\\hline $\mathfrak{e}_{7(7)}$ & $ 
  \mathfrak{sl}(3,\mathbb{R}) \oplus\mathfrak{sl}(2,\mathbb{R}) \oplus\mathfrak{sl}(2,\mathbb{R}) \oplus\mathfrak{sl}(2,\mathbb{R}) $ \\\hline $\mathfrak{e}_{7(7)}$ & $ 
  \mathfrak{sl}(4,\mathbb{R}) \oplus\mathfrak{sl}(2,\mathbb{R}) \oplus\mathfrak{sl}(2,\mathbb{R}) \oplus\mathfrak{sl}(2,\mathbb{R}) $ \\\hline 
$\mathfrak{e}_{8(8)}$ &   $ \mathfrak{sl}(2,\mathbb{R}) $ \\\hline $\mathfrak{e}_{8(8)}$ & $ 
  \mathfrak{sl}(3,\mathbb{R}) $ \\\hline $\mathfrak{e}_{8(8)}$ & $ 
  \mathfrak{sl}(4,\mathbb{R}) $ \\\hline $\mathfrak{e}_{8(8)}$ & $ 
  \mathfrak{sl}(5,\mathbb{R}) $ \\\hline $\mathfrak{e}_{8(8)}$ & $ 
  \mathfrak{sl}(7,\mathbb{R}) $ \\\hline $\mathfrak{e}_{8(8)}$ & $ 
  \mathfrak{sl}(8,\mathbb{R}) $ \\\hline $\mathfrak{e}_{8(8)}$ & $ 
  \mathfrak{so}(5,5) $ \\\hline $\mathfrak{e}_{8(8)}$ & $ 
  \mathfrak{so}(7,7) $ \\\hline $\mathfrak{e}_{8(8)}$ & $ 
  \mathfrak{e}_{6(6)} $ \\\hline $\mathfrak{e}_{8(8)}$ & $ 
  \mathfrak{e}_{7(7)} $ \\\hline $\mathfrak{e}_{8(8)}$ & $ 
  \mathfrak{sl}(2,\mathbb{R}) \oplus\mathfrak{sl}(2,\mathbb{R}) $ \\\hline $\mathfrak{e}_{8(8)}$ & $ 
  \mathfrak{sl}(3,\mathbb{R}) \oplus\mathfrak{sl}(2,\mathbb{R}) $ \\\hline $\mathfrak{e}_{8(8)}$ & $ 
  \mathfrak{sl}(3,\mathbb{R}) \oplus\mathfrak{sl}(3,\mathbb{R}) $ \\\hline $\mathfrak{e}_{8(8)}$ & $ 
  \mathfrak{sl}(4,\mathbb{R}) \oplus\mathfrak{sl}(2,\mathbb{R}) $ \\\hline $\mathfrak{e}_{8(8)}$ & $ 
  \mathfrak{sl}(4,\mathbb{R}) \oplus\mathfrak{sl}(3,\mathbb{R}) $ \\\hline $\mathfrak{e}_{8(8)}$ & $ 
  \mathfrak{sl}(4,\mathbb{R}) \oplus\mathfrak{sl}(4,\mathbb{R}) $ \\\hline $\mathfrak{e}_{8(8)}$ & $ 
  \mathfrak{sl}(5,\mathbb{R}) \oplus\mathfrak{sl}(3,\mathbb{R}) $ \\\hline $\mathfrak{e}_{8(8)}$ & $ 
  \mathfrak{sl}(5,\mathbb{R}) \oplus\mathfrak{sl}(4,\mathbb{R}) $ \\\hline $\mathfrak{e}_{8(8)}$ & $ 
  \mathfrak{sl}(6,\mathbb{R}) \oplus\mathfrak{sl}(2,\mathbb{R}) $ \\\hline $\mathfrak{e}_{8(8)}$ & $ 
  \mathfrak{sl}(6,\mathbb{R}) \oplus\mathfrak{sl}(3,\mathbb{R}) $ \\\hline $\mathfrak{e}_{8(8)}$ & $ 
  \mathfrak{sl}(7,\mathbb{R}) \oplus\mathfrak{sl}(2,\mathbb{R}) $ \\\hline $\mathfrak{e}_{8(8)}$ & $ 
  \mathfrak{so}(4,4) \oplus\mathfrak{sl}(2,\mathbb{R}) $ \\\hline $\mathfrak{e}_{8(8)}$ & $ 
  \mathfrak{so}(4,4) \oplus\mathfrak{sl}(3,\mathbb{R}) $ \\\hline $\mathfrak{e}_{8(8)}$ & $ 
  \mathfrak{so}(4,4) \oplus\mathfrak{sl}(4,\mathbb{R}) $ \\\hline $\mathfrak{e}_{8(8)}$ & $ 
  \mathfrak{so}(5,5) \oplus\mathfrak{sl}(2,\mathbb{R}) $ \\\hline $\mathfrak{e}_{8(8)}$ & $ 
  \mathfrak{so}(5,5) \oplus\mathfrak{sl}(3,\mathbb{R}) $ \\\hline $\mathfrak{e}_{8(8)}$ & $ 
  \mathfrak{so}(6,6) \oplus\mathfrak{sl}(2,\mathbb{R}) $ \\\hline $\mathfrak{e}_{8(8)}$ & $ 
  \mathfrak{sl}(2,\mathbb{R}) \oplus\mathfrak{sl}(2,\mathbb{R}) \oplus\mathfrak{sl}(2,\mathbb{R}) $ \\\hline $\mathfrak{e}_{8(8)}$ & $ 
   \mathfrak{sl}(3,\mathbb{R}) \oplus\mathfrak{sl}(2,\mathbb{R}) \oplus\mathfrak{sl}(2,\mathbb{R}) $ \\\hline $\mathfrak{e}_{8(8)}$ & $ 
   \mathfrak{sl}(3,\mathbb{R}) \oplus\mathfrak{sl}(3,\mathbb{R}) \oplus\mathfrak{sl}(3,\mathbb{R}) $ \\\hline $\mathfrak{e}_{8(8)}$ & $ 
   \mathfrak{sl}(4,\mathbb{R}) \oplus\mathfrak{sl}(2,\mathbb{R}) \oplus\mathfrak{sl}(2,\mathbb{R}) $ \\\hline $\mathfrak{e}_{8(8)}$ & $ 
   \mathfrak{sl}(5,\mathbb{R}) \oplus\mathfrak{sl}(2,\mathbb{R}) \oplus\mathfrak{sl}(2,\mathbb{R}) $ \\\hline $\mathfrak{e}_{8(8)}$ & $ 
   \mathfrak{sl}(5,\mathbb{R}) \oplus\mathfrak{sl}(3,\mathbb{R}) \oplus\mathfrak{sl}(2,\mathbb{R}) $ \\\hline $\mathfrak{e}_{8(8)}$ & $ 
   \mathfrak{so}(5,5) \oplus\mathfrak{sl}(2,\mathbb{R}) \oplus\mathfrak{sl}(2,\mathbb{R}) $ \\\hline $\mathfrak{e}_{8(8)}$ & $ 
   \mathfrak{sl}(3,\mathbb{R}) \oplus\mathfrak{sl}(2,\mathbb{R}) \oplus\mathfrak{sl}(2,\mathbb{R}) \oplus\mathfrak{sl}(2,\mathbb{R}) $ \\\hline $\mathfrak{e}_{8(8)}$ & $ 
   \mathfrak{sl}(3,\mathbb{R}) \oplus\mathfrak{sl}(3,\mathbb{R}) \oplus\mathfrak{sl}(2,\mathbb{R}) \oplus\mathfrak{sl}(2,\mathbb{R}) $ \\\hline $\mathfrak{e}_{8(8)}$ & $ 
   \mathfrak{sl}(3,\mathbb{R}) \oplus\mathfrak{sl}(3,\mathbb{R}) \oplus\mathfrak{sl}(3,\mathbb{R}) \oplus\mathfrak{sl}(2,\mathbb{R}) $ \\\hline $\mathfrak{e}_{8(8)}$ & $ 
   \mathfrak{sl}(4,\mathbb{R}) \oplus\mathfrak{sl}(2,\mathbb{R}) \oplus\mathfrak{sl}(2,\mathbb{R}) \oplus\mathfrak{sl}(2,\mathbb{R}) $ \\\hline $\mathfrak{e}_{8(8)}$ & $ 
   \mathfrak{sl}(4,\mathbb{R}) \oplus\mathfrak{sl}(3,\mathbb{R}) \oplus\mathfrak{sl}(2,\mathbb{R}) \oplus\mathfrak{sl}(2,\mathbb{R}) $ \\\hline $\mathfrak{e}_{8(8)}$ & $ 
   \mathfrak{so}(4,4) \oplus\mathfrak{sl}(2,\mathbb{R}) \oplus\mathfrak{sl}(2,\mathbb{R}) \oplus\mathfrak{sl}(2,\mathbb{R}) $ \\\hline $\mathfrak{e}_{8(8)}$ & $ 
   \mathfrak{sl}(2,\mathbb{R}) \oplus\mathfrak{sl}(2,\mathbb{R}) \oplus\mathfrak{sl}(2,\mathbb{R}) \oplus\mathfrak{sl}(2,\mathbb{R}) \oplus\mathfrak{sl}(2,\mathbb{R}) $ \\\hline $\mathfrak{e}_{8(8)}$ & $ 
   \mathfrak{sl}(3,\mathbb{R}) \oplus\mathfrak{sl}(2,\mathbb{R}) \oplus\mathfrak{sl}(2,\mathbb{R}) \oplus\mathfrak{sl}(2,\mathbb{R}) \oplus\mathfrak{sl}(2,\mathbb{R}) $ \\\hline $\mathfrak{e}_{8(8)}$ & $ 
   \mathfrak{sl}(4,\mathbb{R}) \oplus\mathfrak{sl}(2,\mathbb{R}) \oplus\mathfrak{sl}(2,\mathbb{R}) \oplus\mathfrak{sl}(2,\mathbb{R}) \oplus\mathfrak{sl}(2,\mathbb{R}) $ \\\hline $\mathfrak{e}_{8(8)}$ & $ 
   \mathfrak{sl}(2,\mathbb{R}) \oplus\mathfrak{sl}(2,\mathbb{R}) \oplus\mathfrak{sl}(2,\mathbb{R}) \oplus\mathfrak{sl}(2,\mathbb{R}) \oplus\mathfrak{sl}(2,\mathbb{R})\oplus\mathfrak{sl}(2,\mathbb{R})\oplus\mathfrak{sl}(2,\mathbb{R}) $ \\\hline 
       $\mathfrak{f}_{4(4)}$ &   $ \mathfrak{sl}(2,\mathbb{R}) $ \\\hline $\mathfrak{f}_{4(4)}$ & $ 
   \mathfrak{sl}(3,\mathbb{R}) $ \\\hline $\mathfrak{f}_{4(4)}$ & $ 
   \mathfrak{sl}(4,\mathbb{R}) $ \\\hline $\mathfrak{f}_{4(4)}$ & $ 
   \mathfrak{so}(2,3) $ \\\hline $\mathfrak{f}_{4(4)}$ & $ 
   \mathfrak{sl}(3,\mathbb{R}) \oplus\mathfrak{sl}(2,\mathbb{R}) $ \\\hline $\mathfrak{f}_{4(4)}$ & $ 
   \mathfrak{sl}(2,\mathbb{R}) \oplus\mathfrak{sl}(2,\mathbb{R}) \oplus\mathfrak{sl}(2,\mathbb{R}) $ \\\hline 
 $\mathfrak{g}_{2(2)}$ & $ 
   \mathfrak{sl}(2,\mathbb{R}) $ \\\hline
\end{longtable}

\section{Proof of Theorem \ref{thm:sl2}}
\begin{proof}
Note that  $i_{\ast}([H_{U}/K_{H}])$ is non-zero in $H_{\ast}(G_{U}/K,\mathbb{R})$, by Theorems \ref{thm:omega} and \ref{thm:vanishing-omega}. By Poincar\'e duality there exists $a\in H^{\dim H_{u}/K_{H}}(G_{U}/K,\mathbb{R})$ such that $i^{\ast}(a)\neq 0.$ One can see that $H^{\dim H_{u}/K_{H}}(H_{U}/K_{H},\mathbb{R})$ is one-dimensional. It follows that
the map 
$$i^{\ast}:H^{\dim H_{U}/K_{H}}(G_{U}/K, \mathbb{R})\rightarrow H^{\dim H_{u}/K_{H}}(H_{U}/K_{H},\mathbb{R}) \eqno (1)$$ 
is surjective. Use the following observation from \cite{BC}, Theorem 1.1.
\vskip6pt
\noindent {\sl Let $A$ be a semisimple connected compact Lie group and $U$ a closed and connected subgroup. Then $H^{2}(A/U, \mathbb{R})$ is canonically isomorphic to the center of $\mathfrak{u}^{\ast}.$}
\vskip6pt
Assume that $G/H$ admits a compact Clifford-Klein form. Note that $\dim\,H_{U}/K_{H}$=2 so by the surjectivity of $(1)$ we have
$$H^{2}(G_{U}/K,\mathbb{R})\neq 0.$$
But Theorem 1.1 in \cite{BC} (which we  have mentioned above) implies
$$H^{2}(G_{U}/K,\mathbb{R})= 0,$$
a contradiction.
\end{proof}

\end{document}